\def\R{\mathbb{R}}
\def\Cc{\mathcal{C}}
\def\Fc{\mathcal{F}}
\def\Oc{\mathcal{O}}
\def\Ac{\mathcal{A}}
\def\Adm{\mathrm{Adm}}
\def\sgn{\mathrm{sgn}}
\def\BV{\mathrm{BV}}
\def\Lip{\mathrm{Lip}}
\def\bu{\bar{u}}
\def\bt{\bar{t}}
\def\bx{\bar{x}}
\def\bc{\bar{c}_{\epsilon,m}}
\def\tc{\tilde{c}_{\epsilon,m}}
\def\bd{\bar{d}_{\epsilon,m}}
\def\td{\tilde{d}_{\epsilon,m}}
\newtheorem{theo}{Theorem}
\newtheorem{prop}{Proposition}
\newtheorem{lemm}{Lemma}
\newtheorem{defi}{Definition}
\newtheorem{rema}{Remark}
\title{Asymptotic stabilization of stationnary shock waves using a boundary feedback law.}
\author{Vincent Perrollaz\footnote{Institut Denis Poisson, Universit\'e de Tours, CNRS UMR
    7013, Universit\'e d'Orl\'eans}\footnote{vincent.perrollaz@lmpt.univ-tours.fr} } 
\begin{document}

\maketitle

\begin{abstract}
    In this paper we consider scalar conservation laws with a convex flux. Given a stationnary
    shock, we provide a feedback law acting at one boundary point such that this solution is now asymptotically
    stable in $L^1$-norm in the class of entropy solution. 
\end{abstract}
\tableofcontents

\newpage
\section{Generalities and previous results.}

Scalar conservation laws in one dimension are equations of the form
\begin{equation}\label{eq:1}
    u_t+(f(u))_x=0,
\end{equation}
where $u:\R\to\R$ and $f:\R\to\R$.

They are used, for instance, to model traffic flow or gas networks, but their importance  also
lies in being a first step in the  understanding of systems  of conservation  laws
$u:\R\to\R^d$. Those  systems of
equations model a  huge number  of physical  phenomena: gas
dynamics,   electromagnetism,  magneto-hydrodynamics,   shallow  water
theory, combustion theory\ldots see \cite[Chapter2]{Dafermos2016}.

For equations such as \eqref{eq:1}, the Cauchy problem on the whole line is well posed  in small
time in the framework of classical  solutions and with a $\Cc^1$ initial value. However those solutions generally blow up  in
finite time:
shock  waves   appear.  Hence to get global in time results,  a  weaker   notion  of
solution is called for.

In \cite{Oleinik1956} Oleinik proved that given a flux $f\in\Cc^2$ such that $f''>0$ and any $u_0\in L^\infty(\R)$ there exists a unique
weak solution to:
\begin{gather}
    u_t+(f(u))_x=0,\quad \quad x\in\R\quad \text{and}\quad t>0,\\
    u(0,.)=u_0,
\end{gather}
satisfying the additional condition:
\begin{equation}
    \frac{u(t,x+a)-u(t,x)}{a}\leq \dfrac{E}{t} \quad \quad \text{for}\quad x\in \R,\quad t>0,\quad
    \text{and}\quad a>0. \label{eq:2}
\end{equation} 
Here $E$ depends only on the quantities $\inf(f'')$ and $\sup(f')$ taken on $[-||u_0||_{L^\infty},||u_0||_{L^\infty}]$ and not on
$u_0$.

Later in \cite{Kruzkov1970}, Kruzkov extended this global result to the multidimensional problem, with a $C^1$  flux $f:\R \rightarrow \R^n$
not
necessarily convex:
\begin{equation}
    u_t+\mathrm{div}(f(t,x,u))=g(t,x,u),\quad \quad\text{for }t>0\quad\text{and } x\in \R^n. 
\end{equation} 
This time the weak entropy solution is defined as satisfying the following integral
inequality:\newline
    for all real numbers $k$ and all non-negative functions $\phi\in\Cc^1(\R^2)$
    \begin{multline}
    \iint_{\R^2}|u-k|\phi_t+\mathrm{sgn}(u-k)(f(u)-f(k))\nabla\phi+\mathrm{sgn}(u-k)g(t,x,u)\phi
    dtdx\\
    +\int_{\R}{u_0(x)\phi(0,x)dx}\geq 0.
\end{multline}

The initial boundary value problem for equation \eqref{eq:1} is also well posed as shown by
Leroux in \cite{leRoux1977} for the one
dimensional case with $\BV$ data, by
Bardos, Leroux and N\'ed\'elec in \cite{BLN1979} for the multidimensional case with $C^2$ data
and later by Otto in \cite{Otto1996} (see also
\cite{MNR1996}) for $L^\infty$ data. However the meaning of the boundary condition is quite intricate and the Dirichlet condition may not
be fulfilled pointwise a.e.\ in time. We will go into further details later.

Before describing in detail our particular problem, let us recall a few general facts on
general control systems. Consider such a system :
\begin{equation}
    \begin{cases}
        \dot{X}=F(X,U) , \\
        X(t_0)=X_0, 
    \end{cases}
    \label{eq:3}
\end{equation}
($X$ being the state of the system belongs to the space $\mathcal{X}$ and $U$ the so called control belongs to the space
$\mathcal{U}$), we can consider two classical problems (among others) in control theory.
\begin{enumerate}
    \item First the exact controllability problem which consists, given two states $X_0$ and $X_1$ in $\mathcal{X}$ and a positive time
        $T$, in finding a
        certain function $t\in[0,T] \mapsto U(t)\in \mathcal{U}$ such that the solution to~\eqref{eq:3}  satisfies $X(T)=X_1$.
    \item If $F(0,0)=0$, the problem of asymptotic stabilization by a stationary feedback law asks to find a function of the state
        $X\in \mathcal{X} \mapsto U(X)\in \mathcal{U}$, such that for any state $X_0$ a maximal solution $X(t)$ of the closed loop system:
        \begin{equation}
            \begin{cases} 
                \dot{X}(t)=F(X(t),U(X(t))),\\
                X(t_0)=X_0,
            \end{cases} 
        \end{equation}
        is global in time and satisfies additionally:
        \begin{gather}
            \forall R>0,\ \exists r>0 \quad\text{such that } ||X_0||\leq r \quad \Rightarrow \quad \forall t \in \R,\ ||X(t)||\leq R,\\ 
            X(t)\underset{t \to +\infty}{\to}0.
        \end{gather}
\end{enumerate}
The asymptotic stabilization property might seem weaker than exact controllability : for any
initial state $X_0$, we can find $T$ and $U(t)$ such that the solution to~\eqref{eq:3}
satisfies $X(T)=0$ in this way we stabilize $0$ in finite time. However this method suffers
from a lack of robustness with respect  to perturbation: with any error on the model, or on the
initial state, the control may not act properly anymore since at most we reach a close
neighbourhood of the state $0$. But if that stationnary state is unstable we then deviate
significantly.  This motivates the problem of asymptotic stabilization by a stationary feedback law which is 
more robust. Indeed in the case of perturbations,  once we deviate enough from $0$, the control
acts up again and drive us toward $0$. An additionnal property garanteeing a good robustness
with respect to perturbations is the existence of a Lyapunov functionnal. In finite dimension
it is often the case that if we can find a feedback function $\mathcal{U}$ stabilizing the
stationnary state, we can find another one for which we additionnally have a Lyapunov function. 

We are interested in the controllability properties of \eqref{eq:1} when we use the boundary
data as controls. In the framework of entropy solutions, some results exist for the exact
controllability problem
problem, see \cite{ADGR2015}, \cite{ADM2017}, \cite{AGV2014} \cite{AM1998}, \cite{AM2007},
\cite{AC2005}, \cite{BC2002}, \cite{Glass2007}, \cite{Glass2014}, \cite{GG2007},
\cite{Horsin1998}, \cite{Leautaud2012}, \cite{LY2017}, \cite{Perrollaz2012}. See also \cite{GZ2017} for a related problem.

Once we look at the problem of asymptotic stabilization in a classical framework the
litterature is huge see the book \cite{BC2016} for an up to date bibliography.  In the
framework of entropy solution however the only existing articles (known to the author) are
\cite{CEGGP2017}, \cite{BLDPB2017} and \cite{Perrollaz2013}. Furthermore in those articles the
goal is to stabilize a stationnary state which is actually regular. The entropy framework is
only used to garantee more stability. In this paper we aim to stabilize results particular to
the entropy framework : stationnary shock waves.

\section{Discussion on the problem and on the proofs}

Let us present rather informally and in the simpler case of Burgers' equation the problems we are interested in, the
kind of result we want to obtain and the idea behind the proofs.

Burgers' equation is the simplest equation of type \eqref{eq:1}, it reads
\begin{equation} \label{eq:5}
    \partial_t u+\partial_x\left(\frac{u^2}{2}\right)=0. 
\end{equation}
If we look at the regular stationnary states it is clear that we have the constant states
indexed by $\R$. For any real number $k\in \R$ the function $u_k$ defined by 
\begin{equation}\label{eq:6}
\forall x\in \R,\qquad u_k(x):=k,
\end{equation}
is obviously a solutions of \eqref{eq:5}. 

If we consider a stationnary entropy solution $u$, it is clear, since it is a weak solution that $u^2$ is a constant. Futhermore
using Oleinik's estimate \eqref{eq:2} which is valid for any time $t$ since $u$ is stationnary
that $u$ is actually decreasing. In the end we see that the family of such solutions is
described by a positive real number $k$ and a real number $p$ through
\begin{equation}\label{eq:7}
\forall x\in \R,\qquad u_{k,p}(x):=
\begin{cases}
    k & \text{if } x<p,\\
    -k & \text{if }x>p.
\end{cases}
\end{equation}
Of course we see that at the discontinuity $x=p$, the Rankine-Hugoniot condition holds
$$0=\frac{\frac{k^2}{2}-\frac{(-k)^2}{2}}{k-(-k)}=\frac{f(k)-f(-k)}{k-(-k)}.$$

Now let us look at the stability of those stationnary solutions.
\begin{itemize}
    \item For the family $u_k$ defined by \eqref{eq:6}, using the results of
        \cite{Dafermos2016} Chapter 11 section 8, if we consider an initial data $u_0$ and a number $\epsilon>0$
        such that 
        $$\forall x\in \R,\qquad |u_k(0,x)-u_0(x)|\leq \epsilon,$$
        then we have for the solution $u$ of \eqref{eq:5} corresponding to $u_0$
        $$\forall x \in \R,\qquad |u_k(t,x)-u_(t,x)|\leq \epsilon.$$
        So we have stability (though not asymptotic stabilization) of $u_k$ in the $L^\infty$
        setting. 
    \item Let us now consider \eqref{eq:5} on the interval $(0,L)$ with additionnal boundary
        conditions
        \begin{equation} \label{eq:8}
            \begin{cases}
                u(t,0)=\alpha, \\
                u(t,L)=\beta,
            \end{cases}
        \end{equation}
        once again let us mention that we cannot expect those boundary conditions to hold for
        a.e.\ time $t$. This is related to the presence of boundary layers at the borders, we
        will make a precise statement on the sense of the boundary conditions in the next part.

        It can be shown using generalized characteristics (see \cite{Perrollaz2013}) that if
        $k\neq 0$, $\alpha=\beta=k$ there exists a time $T$ such that  for $u_0\in
        L^\infty(0,L)$ then the entropy solution $u$ satisfy
        $$\forall t\geq T,\qquad \forall x\in (0,L),\qquad u(t,x)=k.$$
        This is enough to show the asymptotic stabilization in $L^\infty(0,L)$ (and of course
        also in $L^1$) toward $u_k$.

        As for robustness result, let us suppose that $\alpha,\beta>0$ then we have a time
        $T>0$ such that for any initial data $u_0$ the entropy solution $u$ satisfies
        $$\forall t\geq T,\qquad \forall x\in (0,L),\qquad u(t,x)=\alpha,$$
        so as long as $\alpha$ is close to $k$ we still have some reasonnable asymptotics.

    \item On the other hand for $k>0$ if we look at the family $(u_{k,p})_{p\in (0,L)}$ it is clear that 
        all  those solutions satisfy \eqref{eq:8} with $\alpha=k$ and $\beta=-k$. Since 
        $$||u_{k,p}-u_{k,p'}||_{L^1(0,L)}=2|p-p'|\cdot k,$$
        we already see that we cannot expect asymptotic stabilization for this family in
        $L^1$. In the $L^\infty$ setting we have the following result from \cite{MT1999}, if
        $\alpha=k$ and $\beta=-k$ there exists a time $T>0$ such that for any initial data
        $u_0\in L^\infty$ there exists $p\in (0,L)$ such that the entropy solution to
        \eqref{eq:5},\eqref{eq:8} satisfies
        $$\forall t\geq T,\qquad \forall x\in (0,L),\qquad u(t,x)=u_{k,p}(x),$$
        but the position $p$ of the singularity does depend on $u_0$, so we basically cannot
        expect asymptotic stabilization in $L^\infty$, though simple stability may still hold.

        As far as robustness is concerned, it can be shown (using the results on generalized
        characteristics of \cite{Perrollaz2013}) for instance that if $\alpha>k$ and
        $\beta=-k$ then we have a time $T>0$ such that for any initial data $u_0$ the entropy
        solution $u$ satisfies
        $$\forall t\geq T,\qquad \forall x\in (0,L),\qquad u(t,x)=\beta,$$
        and even starting from $u_{k,p}$ we go far from it in $L^\infty$ and in $L^1$.
\end{itemize}
For a more precise discussion of the above see \cite{MT1999}.

Following the previous results, the goal is now, given a stationnary state $u_{k,p}$ to provide
a feedback law for the boundary conditions such that $u_{k,p}$ is asymptotically stable for the
semigroup.

To that end the idea is (very roughly) the following. According to the results of \cite{MT1999} we
can expect that if we inject $\alpha=k$ and $\beta=-k$ in the system after some time we get a stationnary
shock wave $u_{k,p'}$, now we want to move the singularity from $p'$ to $p$, to that end we
oserve the value of $u(t,.)$ at $p$ if it is $k$ then $p'<p$ and so the singularity needs to
move to the right, so we modify $\alpha$ to be a bit more than $k$, after some time the trace
to the left of the singularity will be this state so using the rankine Hugoniot condition the 
singularity will move with positive speed.  Of course with $p'>p$ we set $\alpha$ a bit less
than $k$ so after some time the singularity will move to the left. 

In practice there are multiple difficulties when we want to implement the above strategy.
\begin{enumerate}
    \item Since we are in feedback form with no access to $t$, we cannot wait for the profile
        to be a $u_{k,p'}$ before using the second strategy which basically reduces the dynamic
        to a 1d phenomenon.
    \item The time it takes for the inbound $\alpha$ to get to the singularity depends on the
        position of the singularity and of the state $\alpha$. So basically we expect than rather
        than some scalar ODE on the position of the singularity we end up with a delayed
        differential equation with a delay depending on the solution itself.
    \item We will get some kind of oscillatory phenomenon of the singularity around the goal
        $p$, we need to make sure that there is some kind of "damping".
    \item The regularity will be $L^\infty_t\BV_x\cap \Lip_tL^1_x$ so we need some kind of filtered value of $u(t,.)$ near
        $p$.
\end{enumerate}
Let us discuss now the content of the following sections. In Section~\ref{sec:MainResult} we
will provide the main result and some definitions necessary for it. In
Section~\ref{sec:EntropySolutionandBoundaryconditions} we provide the remaining definitions
necessary for the proof. In~\ref{sec:ExistenceandUniqueness}, we will show that the closed loop system does
have a unique solution which depends continuously of the initial data. In
Section~\ref{sec:GeneralizedCharacteristicsAndTheBoundary} we will provide results on generalized
characteristics in particular their interactions with the boundary. They will be our main tool
to study the solutions. In Section~\ref{sec:AsymptoticStabilization} we will prove the main
result using a Lemma on delayed differential equations which itself is proved
in~\ref{sec:DelayedDifferentialEquation}.

\section{Main Result}
\label{sec:MainResult}
\begin{defi}\label{def:hypoFlux}
    In the whole paper we will suppose the following fixed.
    \begin{itemize}
        \item The flux $f:\R\to \R$ will be a $\Cc^2$ uniformly convex function, so in
            particular
            $$\underset{u\to \pm \infty}{\lim} f(u)=+\infty.$$
            We will additionnally suppose that 
            $$\min f = f(0)=0,$$
            but this is not restrictive since given $a$ and $b$ the flux change
            $$\tilde{f}(u):=f(a+u)-b,$$
            sends entropy solution on entropy solution.
        \item Given a positive number $m$ we can now define the numbers $u_l(m)$ and $u_r(m)$
            satisfying
            $$u_l(m)<u_r(m),\qquad f(u_l(m))=f(u_r(m))=m.$$
        \item We can now define another family of stationnary solutions. Let us consider $m>0$
            and $\alpha \in (0,L)$ we define
            \begin{equation}\label{eq:4}
                \forall (t,x)\in \R\times (0,L),\qquad \bu_{\alpha,m}(t,x):=
                \begin{cases}
                    u_l(m) & \quad\text{if } x<\alpha,\\
                    u_r(m) & \quad\text{if } x\geq \alpha.
                \end{cases}
            \end{equation}
    \end{itemize}
\end{defi}
\begin{proof}
    Since $\frac{f(u_l(m))-f(u_r(m))}{u_l(m)-u_r(m)}=0$ the Rankine-Hugoniot
    condition is satisfied and thus $\bu_{\alpha,m}$ is indeed a weak solution.

    Since $u_l(m)<u_r(m)$ and $f$ is convex the following entropy
    condition is also satisfied. For any $k\in (u_l(m), u_r(m))$,
    $$ \frac{f(u_l(m))-f(k)}{u_l(m)-k}\geq
    \frac{f(u_l(m))-f(u_r(m))}{u_l(m)-u_r(m)}\geq
    \frac{f(k)-f(u_r(m))}{k-u_r(m)}.$$
\end{proof}
To describe the feedback law we will need the following functions. We suppose that we are given an
interval $[0,L]$ and a position $\alpha \in (0,L)$.
\begin{defi}
    Let us consider three
    positive numbers $\epsilon,\ \delta, \nu$. (Those will be parameters to be tuned later on) We will suppose that
    $[\alpha-\delta,\alpha+\delta]\subset (0,L)$ and define the functions.
    \begin{equation}\label{eq:defAc}
        \forall z\in \R,\qquad \Ac_{\epsilon, \nu}(z):=
        \begin{cases}
            -\epsilon & \quad\text{if } z\leq -\nu,\\
            \epsilon\frac{z}{\nu} & \quad\text{if } -\nu\leq z \leq \nu\\
            \epsilon & \quad\text{if } \nu \leq z
        \end{cases}.
    \end{equation}
    \begin{equation}\label{eq:defOc}
        \forall u\in L^1(0,L),\qquad
        \Oc_{\alpha,\delta}(u):=\frac{1}{2\delta}\int_{\alpha-\delta}^{\alpha+\delta}{(u(x)-\bu_{\alpha,m})dx}.
    \end{equation}
\end{defi}

We will now be interested in the solutions the following closed loop system
\begin{equation}\label{eq:SystemeFerme}
    \begin{cases}
        \partial_t u +\partial_x f(u)=0,\\
        u(t,0)"="u_l(m)-\Ac_{\epsilon,\nu}(\Oc_{\alpha,\delta}(u(t,.))),\\
        u(t,L)"="u_r(m),\\
        u(0,x)=u_0(x)
    \end{cases}
\end{equation}
\begin{theo}\label{thm:principal}
    Given $L$, $\alpha$, $m$ and $\delta$ we can find $\epsilon$ and $\nu$ small enough such
    that given $u_0\in \BV(0,L)$ the system \eqref{eq:SystemeFerme} has a unique entropy
    solution $u$. Furthermore there are constants $C,M>0$ such that
    \begin{equation}\label{eq:StabiliteAs}
        \forall t\geq 0,\qquad ||u(t,.)-\bu_{\alpha,m}||_{L^1(0,L)}\leq
        Me^{-Ct}||u_0-\bu_{\alpha,m}||_{L^1(0,L)}.
    \end{equation}
\end{theo}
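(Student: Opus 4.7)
The plan is to follow the four-stage structure outlined in the introduction. The first stage (Section~\ref{sec:ExistenceandUniqueness}) is to establish well-posedness of the closed loop system. Since $\Oc_{\alpha,\delta}$ is $\frac{1}{2\delta}$-Lipschitz from $L^1(0,L)$ to $\R$ and $\Ac_{\epsilon,\nu}$ is $\frac{\epsilon}{\nu}$-Lipschitz from $\R$ to $\R$, the composition giving the feedback trace is Lipschitz from $L^1$ to $\R$ with constant $\frac{\epsilon}{2\delta\nu}$. One freezes a candidate boundary trace $v(\cdot)$, invokes Otto/leRoux well-posedness for the IBVP with left datum $v$ and right datum $u_r(m)$, and defines $\Phi(v)(t):=u_l(m)-\Ac_{\epsilon,\nu}(\Oc_{\alpha,\delta}(u^v(t,\cdot)))$. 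Using the $L^1$-contractivity of Kruzkov-type solutions with respect to boundary data, $\Phi$ is a contraction on sufficiently short time intervals, so by iteration we obtain a unique global entropy solution in $L^\infty_t\BV_x\cap\Lip_t L^1_x$.

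The second stage applies generalized characteristics (Section~\ref{sec:GeneralizedCharacteristicsAndTheBoundary}). Because $u(t,L)\equiv u_r(m)$, backward characteristics from the right of a certain shock curve $t\mapsto p(t)$ all carry the value $u_r(m)$, while those from the left, traced back to $x=0$, carry values within $\epsilon$ of $u_l(m)$. Choosing $\epsilon$ small enough prevents $p(t)$ from reaching either boundary and yields, after a finite transient, a single-shock profile whose position satisfies the Rankine--Hugoniot relation
\begin{equation}
    \dot{p}(t)=\frac{f(u^-(t))-f(u_r(m))}{u^-(t)-u_r(m)},
\end{equation}
where the left trace $u^-(t)$ equals the boundary value $u(t-\tau(t),0)$ for a delay $\tau(t)$ computed by integrating $1/f'$ along the backward characteristic from the shock to $x=0$.

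In the third stage (Section~\ref{sec:AsymptoticStabilization}) one inserts the feedback. Once the single-shock regime is reached, a direct computation gives $\Oc_{\alpha,\delta}(u(s,\cdot))=\frac{u_l(m)-u_r(m)}{2\delta}(p(s)-\alpha)$ up to a transient error $r(t)$ that decays with the initial layer. Substituting into the Rankine--Hugoniot relation yields a scalar state-dependent delayed differential equation
\begin{equation}
    \dot{p}(t)=G\!\left(\Ac_{\epsilon,\nu}\!\left(\tfrac{u_l(m)-u_r(m)}{2\delta}(p(t-\tau(t))-\alpha)\right)\right)+r(t),
\end{equation}
with $G$ smooth, $G(0)=0$ and $\sgn(G'(0))$ fixed by the sign of $\Ac_{\epsilon,\nu}$ so that the loop is negative in a neighbourhood of $\alpha$. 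The lemma proved in Section~\ref{sec:DelayedDifferentialEquation} then provides $|p(t)-\alpha|\leq Me^{-Ct}|p(0)-\alpha|$, conditional on $\epsilon,\nu$ being tuned so that the effective loop gain $\frac{\epsilon}{\nu}\cdot\frac{|u_l(m)-u_r(m)|}{2\delta}\cdot|G'(0)|$ is small compared with $1/\sup_t\tau(t)$, ensuring monotone rather than oscillatory convergence. The identity
\begin{equation}
    \|u(t,\cdot)-\bu_{\alpha,m}\|_{L^1(0,L)}=(u_r(m)-u_l(m))\,|p(t)-\alpha|+\text{(transient layer terms)}
\end{equation}
then transfers the ODE decay to the $L^1$ estimate~\eqref{eq:StabiliteAs}.

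The main obstacle lies in step three: rigorously reducing the PDE dynamics to a scalar delayed ODE with a state-dependent delay. This requires careful bookkeeping on backward characteristics crossing the boundary, a quantitative decay estimate on the transient error $r(t)$ (so that it does not dominate the feedback), and ensuring the shock never leaves a compact subinterval of $(0,L)$, which in turn dictates the smallness of $\epsilon$ and $\nu$. The two earlier stages are largely standard Otto/leRoux well-posedness and Dafermos-style characteristic analysis adapted to a Lipschitz feedback boundary trace, but the delayed ODE reduction is where the whole strategy stands or falls.
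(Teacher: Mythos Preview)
Your overall architecture matches the paper's: fixed-point well-posedness, a generalized-characteristics phase yielding a single shock curve, reduction to a scalar delayed ODE, and transfer back to the $L^1$ norm. But the reduction step contains a real gap.

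You reduce to a delayed ODE for the shock position $p(t)$ and absorb the discrepancy between $\Oc_{\alpha,\delta}(u(s,\cdot))$ and $\tfrac{u_l(m)-u_r(m)}{2\delta}(p(s)-\alpha)$ into an error $r(t)$ that you say ``decays with the initial layer''. It does not. After the transient, the left trace at the shock is not $u_l(m)$ but $u_l(m)-\Ac_{\epsilon,\nu}(S(\cdot))$ evaluated at a retarded time, and the same is true of $u(s,x)$ for every $x<p(s)$. So both your formula for $\Oc_{\alpha,\delta}$ in terms of $p$ and your formula for $\|u(t,\cdot)-\bu_{\alpha,m}\|_{L^1}$ carry a \emph{persistent} error of size $O\!\big(\tfrac{\epsilon}{\nu}\sup|S|\big)$, not a transient one. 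This can be absorbed, but only by treating it as a perturbation of the same order as the feedback itself, which changes the smallness condition you need and requires an argument you have not given.

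The paper sidesteps this entirely by choosing a different reduced variable: it writes the delayed ODE for $S(t)=\Oc_{\alpha,\delta}(u(t,\cdot))$ itself rather than for $p(t)$. The point is that once the shock is confined to $(\alpha-\delta,\alpha+\delta)$ (not merely to a compact subinterval of $(0,L)$ as you write), one has exactly $u(t,\alpha+\delta)=u_r(m)$ and $u(t,\alpha-\delta)=u_l(m)-\tfrac{\epsilon}{\nu}S(t-\tau(t))$, so the flux balance across $[\alpha-\delta,\alpha+\delta]$ gives the \emph{exact} equation
\[
\dot S(t)=\frac{f\!\big(u_l(m)-\tfrac{\epsilon}{\nu}S(t-\tau(t))\big)-f(u_r(m))}{2\delta},
\]
with no remainder at all. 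The delayed-ODE lemma then applies cleanly to $S$, and only afterwards does one recover decay of $|p(t)-\alpha|$ and of $|u(t,x)-u_l(m)|$ on $\{x<p(t)\}$ from the decay of $S$. Switching your reduced variable from $p$ to $S$, and first proving confinement of the shock to $(\alpha-\delta,\alpha+\delta)$, would close the gap.
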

\begin{rema}
    \begin{itemize}
        \item In the proofs we will precise the way $\epsilon$ and $\nu$ must be chosen.
        \item     Note that we have chosen to act at the left boundary but the same result would hold with an
            action at the right boundary. 
        \item The convexity of $f$ is  however crucial to the analysis.
    \end{itemize}
\end{rema}

\section{Entropy solution and Boundary conditions}
\label{sec:EntropySolutionandBoundaryconditions}

We need to precise the sense in which we consider the solutions since we have both regularity
problems and overdetermined boundary conditions (see \cite{Dafermos2016} for a general exposition). We
will follow \cite{leRoux1976} and \cite{BLN1979}. (one can also look at \cite{AWC2006} and
\cite{CR2015} for more
general and up to date results)
We will need the notations
$$\forall (a,b)\in \R^2,\qquad I(a,b):=[\min(a,b),\max(a,b)].$$
$$\forall z\in \R,\qquad \sgn(z):=
\begin{cases}
    1 & \quad\text{if } z>0\\
    -1 & \quad\text{if }z<0\\
    0 & \quad\text{otherwise}
\end{cases}$$
\begin{defi}
    We say that a function $u\in L^\infty([0,+\infty ); \BV(0,L))$ is an entropy solution of
    \eqref{eq:SystemeFerme} when for any number $k\in \R$ and any positive function $\phi\in
    \Cc^1_c(\R^2)$ we have
    \begin{multline}\label{eq:SolutionFaible}
        \int_0^{+\infty} \int_0^L{|u(t,x)-k|\partial_t
        \phi(t,x)+\sgn(u(t,x)-k)(f(u(t,x))-f(k))\partial_x \phi(t,x)dx dt} \\
        \int_0^{+\infty}{\sgn(u_r(m)-k)(f(k)-f(u(t,L^-)))\phi(t,L)dt}\\
        -\int_0^{+\infty}{\sgn(u_l(m)-\Ac_{\epsilon,\nu}(\Oc_{\alpha,\delta}(u(t,.)))-k)(f(k)-f(u(t,0^+)))\phi(t,0)dt}\\
        +\int_0^L{|u_0(x)-k|\phi(0,x)dx}\geq 0
    \end{multline}
\end{defi}
Let us be more explicit on the sense in which the boundary conditions hold.
\begin{defi}\label{def:defConditionsBords}
    For $u\in \R$ we define $\Adm_l(u)$ and $\Adm_r(u)$to be
    $$
    \Adm_l(u):=
    \begin{cases}
        \{z\in \R\ :\ f'(z)\leq 0\} & \quad\text{if } f'(u)\leq 0\\
        \{z\in \R\ :\ f'(z)<0 \quad\text{and }f(z)\geq f(u)\} \cup \{u\} & \quad\text{if } f'(u)>0
    \end{cases}
    $$
    $$
    \Adm_r(u):=
    \begin{cases}
        \{z\in \R\ :\ f'(z)\geq 0\} & \quad\text{if } f'(u)\geq 0\\
        \{z\in \R\ :\ f'(z)>0 \quad\text{and }f(z)\geq f(u)\} \cup \{u\} & \quad\text{if } f'(u)<0
    \end{cases}
    $$
\end{defi}
At the right boundary we ask that for almost all time $t\geq 0$
$$u(t,L^-)\in \Adm_r(u_r(t)),$$
which means
\begin{equation}\label{eq:ConditionBordDroit}
    \forall k\in I(u(t,L^-), u_r(m)),\qquad \sgn(u(t,L^-)-u_r(m))(f(u(t,L^-))-f(k)) \geq
    0,
\end{equation}

At the left boundary we ask that for almost all time $t\geq 0$
$$u(t,0^+)\in \Adm_l(u_l(t)),$$
\begin{multline}\label{eq:ConditionBordGauche}
    \forall k\in I\big(u(t,0^+),
    u_l(m)-\Ac_{\epsilon,\nu}(\Oc_{\alpha,\delta}(u(t,.)))\big),\\
    \sgn\Big(u(t,0^+)-(u_l(m)-\Ac_{\epsilon,\nu}(\Oc_{\alpha,\delta}(u(t,.))))\Big)(f(u(t,0^+))-f(k)) \leq
    0,
\end{multline}
\begin{rema}
    The formulation in term of admissibility set depends on the convexity of $f$ while
    \eqref{eq:ConditionBordDroit} and \eqref{eq:ConditionBordGauche} are more general.
\end{rema}
\section{Existence and Uniqueness}
\label{sec:ExistenceandUniqueness}
In this part we consider a fixed $u_0\in \BV(0,L)$ and we want to show the existence and
uniqueness of a solution to the closed loop system \eqref{eq:SystemeFerme}.

Let us first recall the following result from \cite{leRoux1976}, \cite{BLN1979}.
\begin{theo}\label{thm:resultatBoucleOuverte}
    Given any time $T>0$ and functions $u_0\in \BV(0,L)$, $v_l\in \BV_{loc}(0,+\infty)$ and $v_r\in
    \BV_{loc}(0,+\infty)$ there exists a unique entropy solution $v\in
    L^\infty_{loc}((0,+\infty);\BV(0,L))\cap
    \Lip_{loc}(\R^+;L^1(0,L))$ to 
    \begin{equation}\label{eq:SystemOuvert}
        \begin{cases}
            \partial_t v+\partial_x f(v)=0,\\
            v(t,0)=v_l(t),\\
            v(t,L)=v_r(t),\\
            v(0,x)=v_0(x).
        \end{cases}
    \end{equation}
\end{theo}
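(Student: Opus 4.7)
This is the classical well-posedness result of LeRoux~\cite{leRoux1977} and Bardos--Leroux--N\'ed\'elec~\cite{BLN1979}, and the natural route is vanishing viscosity, which simultaneously produces the $\BV$ regularity and the correct BLN boundary admissibility. After mollifying the data to $u_0^\mu, v_l^\mu, v_r^\mu$ with uniformly bounded $\BV$ norms, I would solve the parabolic regularization
\[
\partial_t v^{\mu,\varepsilon}+\partial_x f(v^{\mu,\varepsilon})=\varepsilon\,\partial_{xx} v^{\mu,\varepsilon}, \quad v^{\mu,\varepsilon}(t,0)=v_l^\mu(t),\quad v^{\mu,\varepsilon}(t,L)=v_r^\mu(t),\quad v^{\mu,\varepsilon}(0,\cdot)=u_0^\mu,
\]
by standard parabolic theory, then pass to the limit $\varepsilon\to 0$ and finally $\mu\to 0$.

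\textbf{A priori estimates and passage to the limit.} Three estimates uniform in $\varepsilon$ drive the argument: an $L^\infty$ bound via the parabolic maximum principle; a $\BV_x$ bound of the form
\[
\|v^{\mu,\varepsilon}(t,\cdot)\|_{\BV(0,L)}\leq \|u_0^\mu\|_{\BV}+\mathrm{TV}_{[0,t]}(v_l^\mu)+\mathrm{TV}_{[0,t]}(v_r^\mu),
\]
obtained by differentiating the equation in $x$, multiplying by a smooth approximation of $\sgn(\partial_x v^{\mu,\varepsilon})$ and integrating on $[0,L]$, the boundary contributions being absorbed by $\mathrm{TV}(v_l^\mu), \mathrm{TV}(v_r^\mu)$; and a Lipschitz-in-$t$ bound in $L^1(0,L)$ obtained by integrating the PDE in $x$. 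Helly's theorem then extracts a subsequential $L^1_{loc}$ limit $v\in L^\infty_{loc}(\R^+;\BV(0,L))\cap \Lip_{loc}(\R^+;L^1(0,L))$. Multiplying the viscous equation by a regularized $\sgn(v^{\mu,\varepsilon}-k)\phi$ with $\phi\geq 0$, integrating by parts and passing to the limit, yields the interior Kruzkov inequality appearing in~\eqref{eq:SolutionFaible}. The outer limit $\mu\to 0$ is handled by $L^1$-stability of the construction with respect to the data.

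\textbf{Boundary admissibility and uniqueness --- the main difficulty.} The only nontrivial step --- and the reason the paper cites rather than reproves this result --- is identifying the correct boundary condition for the limit: the $\BV$ trace $v(t,0^+)$ must lie in $\Adm_l(v_l(t))$, and similarly at $x=L$. This requires the boundary-layer analysis of \cite{BLN1979}, where one compares $v^{\mu,\varepsilon}$ near $x=0$ with the stationary profiles of $\partial_x f(w)=\varepsilon\partial_{xx} w$ connecting the Dirichlet value to an admissible interior state; non-admissible states are ruled out in the limit by this comparison. Uniqueness then follows from Kruzkov's doubling of variables on $(0,L)\times \R^+$: the doubling produces a boundary term involving the traces of two candidate solutions, and a case analysis on the sign of $f'$ at each trace, using precisely the clauses of Definition~\ref{def:defConditionsBords}, shows this term is non-positive. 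Identifying $\Adm_l, \Adm_r$ as the correct admissibility sets and verifying they are sufficient to close the uniqueness argument is the BLN contribution; everything else is standard Kruzkov theory transplanted to a bounded interval.
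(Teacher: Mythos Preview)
The paper does not prove this theorem at all: it is introduced with ``Let us first recall the following result from \cite{leRoux1976}, \cite{BLN1979}'' and is used as a black box in the construction of the fixed-point operator $\Fc$. So there is no proof in the paper to compare your proposal against.

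Your sketch is a faithful outline of the classical vanishing-viscosity proof due to LeRoux and Bardos--Leroux--N\'ed\'elec, and you correctly identify the one genuinely nontrivial point (the boundary-layer analysis that singles out $\Adm_l$, $\Adm_r$ as the admissible trace sets). Since the paper's intent is precisely to \emph{cite} this result rather than reprove it, your proposal is appropriate as an expository supplement but is not needed for, and does not interact with, the rest of the paper's argument.
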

Once again we interpret a solution of \eqref{eq:SystemOuvert} to mean 
\begin{multline*}
    \int_0^{+\infty} \int_0^L{|v(t,x)-k|\partial_t
    \phi(t,x)+\sgn(v(t,x)-k)(f(v(t,x))-f(k))\partial_x \phi(t,x)dx dt} \\
    \int_0^{+\infty}{\sgn(v_r(t)-k)(f(k)-f(v(t,L^-)))\phi(t,L)dt}\\
    -\int_0^{+\infty}{\sgn(v_l(t)-k)(f(k)-f(v(t,0^+)))\phi(t,0)dt}\\
    +\int_0^L{|v_0(x)-k|\phi(0,x)dx}\geq 0,
\end{multline*}
for any number $k$ and any positive function $\phi\in \Cc^1(\R^2)$.
\begin{defi}
    Given a function $z\in L^\infty(\R^+)\cap \Lip(\R^+)$ we use the previous result to get
    $u\in L^\infty_{loc}(\R^+;\BV(0,L))\cap \Lip_{loc}(\R^+; L^1(0,L))$ the solution to
    \begin{equation}
        \begin{cases}
            \partial_t u+\partial_x f(u)=0,\\
            u(t,0)=u_l(m)-\Ac_{\epsilon,\nu}(z(t)),\\
            u(t,L)=u_r(m),\\
            u(0,x)=u_0(x).
        \end{cases}
    \end{equation}
    We will now define the operator $\mathcal{F}$ by 
    \begin{equation}\label{eq:DefinitionOperateur}
        \forall t\geq 0,\qquad \Fc(z)(t):= \Oc_{\alpha,\delta}(u(t,.)).
    \end{equation}
\end{defi}
We now recall another result from \cite{MT1999}.
\begin{prop}\label{prop:Comparaison}
    If we consider initial data $v_0$, $w_0$ in $\BV(0,L)$ and boundary data $v_l$, $v_r$,
    $w_l$ and $w_r$ in $\Lip([0,T])$, the solutions $v$ and $w$ of 
    \begin{equation}
        \begin{cases}
            \partial_t v+\partial_x f(v)=0,\\
            v(t,0)=v_l(t),\\
            v(t,L)=v_r(t),\\
            v(0,x)=v_0(x),
        \end{cases}
        \qquad 
        \begin{cases}
            \partial_t w+\partial_x f(w)=0,\\
            w(t,0)=w_l(t),\\
            w(t,L)=w_r(t),\\
            w(0,x)=w_0(x),
        \end{cases}
    \end{equation}
    satisfy
    \begin{multline}\label{eq:Comparaison}
        \forall T>0,\qquad \int_0^L{(v(T,x)-w(T,x))^+dx}\leq
        \int_0^L{(v_0(x)-w_0(x))^+dx}\\
        +\int_0^T{(v_l(t)-w_l(t))^++(v_r(t)-w_r(t))^+dt}.
    \end{multline}
    Where we used 
    $$\forall r\in \R,\qquad r^+:=\max(0,r).$$
\end{prop}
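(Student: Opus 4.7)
The natural approach is Kruzhkov's doubling-of-variables method, adapted on the one hand to the initial-boundary value setting and on the other hand to the one-sided semi-Kruzhkov entropy $\eta^+(u,k) := (u-k)^+$ with companion flux $q^+(u,k) := \mathbf{1}_{\{u>k\}}(f(u)-f(k))$, rather than the full Kruzhkov entropy $|u-k|$. A preliminary step is to verify that the entropy-solution definition underlying Theorem~\ref{thm:resultatBoucleOuverte}, phrased in terms of $|u-k|$, automatically yields its one-sided counterpart, in which each occurrence of $\sgn$ is replaced by $\mathbf{1}_{\{\cdot>0\}}$ and $|v-k|$ by $(v-k)^+$. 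This follows from the decomposition $|v-k| = (v-k)^+ + (k-v)^+$ together with the observation that for $k > \|v\|_{L^\infty}$ only the second term survives, which after a linearity argument lets one split the full inequality into its two independent half-inequalities; the boundary terms decompose consistently.

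Then I would write the semi-entropy inequality for $v$ at $(t,x)$ with $k$ replaced by $w(s,y)$ against a product test function $\Phi(t,x)\rho_\varepsilon(t-s)\rho_\varepsilon(x-y)$, and symmetrically the one for $w$ at $(s,y)$ with $k=v(t,x)$. Summing, sending $\varepsilon\to 0$, and using the $\Lip_{loc}(\R^+;L^1(0,L))$ regularity of $v$ and $w$ to collapse the mollifiers onto the diagonal, yields a single-variable inequality of the form
\begin{multline*}
\iint_{(0,+\infty)\times(0,L)}(v-w)^+\partial_t\Phi+q^+(v,w)\partial_x\Phi\,dx\,dt \\
+ (\text{boundary contributions at } x=0 \text{ and } x=L) + \int_0^L (v_0-w_0)^+\Phi(0,x)\,dx \geq 0,
\end{multline*}
valid for non-negative $\Phi\in\Cc^1_c(\R^2)$. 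Specializing $\Phi$ to a monotone approximation of $\mathbf{1}_{[0,T]\times[0,L]}$ and exploiting the time-continuity in $L^1$ to identify $v(T,\cdot)$ and $w(T,\cdot)$ as strong limits converts the time volume term into $\int_0^L(v_0-w_0)^+\,dx - \int_0^L(v(T)-w(T))^+\,dx$, while the spatial volume term produces trace fluxes $q^+(v(t,0^+),w(t,0^+))$ and $-q^+(v(t,L^-),w(t,L^-))$ to be combined with the explicit semi-entropy boundary terms.

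The main obstacle, and the step where the structure of the problem really matters, is the boundary analysis: I need to show that at $x=L$ the combined trace contribution and the explicit semi-entropy boundary term sum, for a.e.~$t$, to at most $(v_r(t)-w_r(t))^+$, and symmetrically at $x=0$ to at most $(v_l(t)-w_l(t))^+$. This is where convexity of $f$ and the admissibility conditions of Definition~\ref{def:defConditionsBords} are indispensable: the requirements $v(t,L^-)\in\Adm_r(v_r(t))$ and $w(t,L^-)\in\Adm_r(w_r(t))$ severely restrict the quadruples $(v(t,L^-),w(t,L^-),v_r(t),w_r(t))$ that can actually occur, and a case analysis on their relative ordering, combined with the monotonicity of $f$ on each of the half-lines $\{f'\geq 0\}$ and $\{f'\leq 0\}$, shows in each admissible configuration that the one-sided boundary flux is dominated by $f((v_r\vee w_r))-f((v_r\wedge w_r))$ times the characteristic function of $\{v_r>w_r\}$, i.e.~exactly by what an $L^1$-type estimate in the boundary data can control. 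The left endpoint is handled symmetrically, with the sign inversion already visible in \eqref{eq:ConditionBordGauche} versus \eqref{eq:ConditionBordDroit}. Assembling the four pieces (time volume, space volume, two endpoints) then gives precisely~\eqref{eq:Comparaison}.
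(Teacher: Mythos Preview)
The paper does not actually prove this proposition: its entire proof is the single sentence ``This is just a particular case of Theorem 2.4 in \cite{MT1999}.'' Your proposal is a correct outline of the standard argument behind that cited result---the semi-Kruzhkov doubling of variables combined with a case analysis of the BLN boundary conditions---so you are supplying precisely what the paper outsources.

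Two minor remarks on your outline. First, convexity of $f$ is not truly indispensable for this comparison principle; the result in \cite{MT1999} is stated for general fluxes via conditions of the form \eqref{eq:ConditionBordDroit}--\eqref{eq:ConditionBordGauche}, and convexity here only streamlines the boundary case analysis by permitting the explicit description through the sets $\Adm_l$ and $\Adm_r$. Second, the boundary bound your argument naturally produces is a flux difference of the type $f(v_r\vee w_r)-f(v_r\wedge w_r)$ on $\{v_r>w_r\}$, which is controlled by $\Lip(f)\,(v_r-w_r)^+$ rather than by $(v_r-w_r)^+$ alone; this suggests the statement \eqref{eq:Comparaison} as transcribed in the paper is missing a harmless Lipschitz factor in front of the boundary integral. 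That is a defect of the paper's transcription, not of your reasoning, and is immaterial for every subsequent use of the proposition.
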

\begin{proof}
    This is just a particular case of Theorem 2.4 in \cite{MT1999}.
\end{proof}
\begin{prop}
    The space $L^\infty(\R^+)\cap \Lip(\R^+)$ is stable under $\Fc$. Furthermore $\Fc$ has
    a unique fixed point on this space.
\end{prop}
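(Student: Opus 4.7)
My strategy has three parts: invariance, a Volterra-type contraction estimate derived from Proposition~\ref{prop:Comparaison}, and Banach's fixed-point theorem in a weighted norm.

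\emph{Invariance.} Given $z \in L^\infty(\R^+)\cap\Lip(\R^+)$, the left datum $v_l(t) := u_l(m) - \Ac_{\epsilon,\nu}(z(t))$ is bounded by $|u_l(m)|+\epsilon$ and Lipschitz with constant at most $(\epsilon/\nu)\Lip(z)$, so in $\BV_{loc}(\R^+)$; the right datum is the constant $u_r(m)$. Theorem~\ref{thm:resultatBoucleOuverte} then provides an open-loop solution $u\in L^\infty_{loc}(\R^+;\BV(0,L))\cap\Lip_{loc}(\R^+;L^1(0,L))$ whose $L^\infty$ norm is bounded uniformly in $z$ by a constant depending only on $\|u_0\|_\infty$, $|u_l(m)|+\epsilon$ and $|u_r(m)|$ (via $L^\infty$-stability of the IBVP). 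Composing with the linear functional $\Oc_{\alpha,\delta}$, which has operator norm at most $1$ on $L^\infty(0,L)$ and at most $1/(2\delta)$ on $L^1(0,L)$, yields $\Fc(z)\in L^\infty(\R^+)\cap\Lip_{loc}(\R^+)$ with $\|\Fc(z)\|_{L^\infty(\R^+)}\leq M$ for some $M$ independent of $z$.

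\emph{Contraction estimate.} For $z^1,z^2\in L^\infty\cap\Lip$ with corresponding open-loop solutions $u^1,u^2$, applying Proposition~\ref{prop:Comparaison} in both orientations while using that the right data agree and that $\Ac_{\epsilon,\nu}$ is $(\epsilon/\nu)$-Lipschitz gives
$$\|u^1(t,\cdot)-u^2(t,\cdot)\|_{L^1(0,L)}\leq \frac{\epsilon}{\nu}\int_0^t|z^1(s)-z^2(s)|\,ds;$$
composing with $\Oc_{\alpha,\delta}$,
$$|\Fc(z^1)(t)-\Fc(z^2)(t)|\leq \frac{\epsilon}{2\delta\nu}\int_0^t|z^1(s)-z^2(s)|\,ds.$$

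\emph{Banach fixed point.} Equip the closed ball $B_M:=\{z\in L^\infty(\R^+):\|z\|_\infty\leq M\}$ with the weighted norm $\|z\|_\lambda:=\sup_{t\geq 0}e^{-\lambda t}|z(t)|$, under which $B_M$ is a complete metric space. Multiplying the previous estimate by $e^{-\lambda t}$ and evaluating the resulting convolution gives $\|\Fc(z^1)-\Fc(z^2)\|_\lambda\leq(\epsilon/(2\delta\nu\lambda))\|z^1-z^2\|_\lambda$, so for $\lambda>\epsilon/(2\delta\nu)$ the map $\Fc$ is a strict contraction, and $\Fc(B_M)\subset B_M$ by the invariance step. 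Banach's theorem delivers a unique fixed point $z^*\in B_M$, and since $z^*=\Fc(z^*)$, the invariance step shows $z^*\in\Lip_{loc}(\R^+)$, hence $z^*\in L^\infty(\R^+)\cap\Lip(\R^+)$. The hard part is Proposition~\ref{prop:Comparaison} itself, already established; the only remaining technical subtlety is that Banach requires $\Fc$ defined on all of $B_M$, whereas the paper defines it on $L^\infty\cap\Lip$. This is handled either by invoking the $L^\infty$-data well-posedness theory of Otto~\cite{Otto1996} to extend $\Fc$, or equivalently by running the contraction on intervals $[0,T]$ with $T<2\delta\nu/\epsilon$ and iterating on successive intervals using the causality of the IBVP.
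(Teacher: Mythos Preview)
Your argument is correct and reaches the same conclusion, but the route differs from the paper's. The paper also derives the Volterra-type estimate
\[
|\Fc(y)(T)-\Fc(z)(T)|\le \frac{\epsilon}{2\delta\nu}\int_0^T|y(s)-z(s)|\,ds,
\]
but then splits the argument: for \emph{existence} it observes that $\Fc$ maps $L^\infty\cap\Lip$ into a set of functions that are uniformly bounded \emph{and} share a common Lipschitz constant (obtained by testing the weak formulation against trapezoidal cutoffs), hence into a compact set for local uniform convergence, and applies Schauder; for \emph{uniqueness} it uses the estimate on short intervals $[T^*,T^*+2\delta\nu/\epsilon)$ and iterates. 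Your weighted-norm Banach approach collapses both steps into one and is arguably cleaner. The only point to tighten is the domain issue you flag at the end: rather than invoking Otto's $L^\infty$ theory or reverting to time-slicing, the simplest fix is to run Banach on the set $X:=\{z:\|z\|_\infty\le M,\ \Lip(z)\le K\}$, where $K$ is the uniform Lipschitz bound you get from the invariance step (it depends only on $\|u_0\|_\infty$, $m$, $\epsilon$, $\delta$ and $f$, not on $z$). This $X$ is $\Fc$-invariant, and it is closed---hence complete---in your weighted norm because local uniform limits preserve both the sup bound and the Lipschitz constant. With that adjustment your proof is self-contained and avoids the compactness/Schauder machinery entirely.
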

\begin{proof}
    \begin{itemize}
        \item We note that using the definition of $\Ac_{\epsilon,\nu}$ we get
            $$\forall t\geq0,\qquad \Ac_{\epsilon,\nu}(z(t))\in [-\epsilon,\epsilon],$$
            therefore with 
            $$C:=\max\big(||u_0||_{L^\infty(0,L)}, |u_l(m)|+\epsilon,
            |u_r(m)|\big),$$
            we see that the constant function $C$ (resp. $-C$) is solution of the system which is
            greater (resp.\ smaller) than $u$ on the boundary so using Proposition~\ref{prop:Comparaison} we see
            that we have
            $$\forall (t,x)\in \R^+\times [0,L],\qquad -C\leq u(t,x)\leq C.$$
        \item For the next part of the result we use $k=\pm C$ in the definition of an entropy solution
            with a test function $\phi$ which has a support in $(0,+\infty)\times (0,L)$ to get
            \begin{equation}\label{eq:egalite}
                \int_0^{+\infty}\int_0^L{u(t,x)\partial_t \phi(t,x)+f(u(t,x))\partial_x \phi(t,x)dxdt}=0.
            \end{equation}
            A classical density argument shows that the equation above is still admissible if $\phi$
            is just Lipschitz.

            Now given a time $T$ positive numbers $h$ and  $\theta$ we define 
            $$\phi_\theta(t,x):=\psi_\theta(t)\kappa_\theta(x),$$
            with
            $$
            \psi_\theta(t):=
            \begin{cases}
                0 & \quad\text{if }t\leq T-\theta\\
                \frac{t-T-\theta}{\theta} & \quad\text{if }T-\theta \leq t\leq T\\
                1 & \quad\text{if } T\leq t \leq T+h\\
                \frac{T+h+\theta-t}{\theta} & \quad\text{if }T+h\leq t\leq T+h+\theta\\
                0 & \quad\text{otherwise,}
            \end{cases}$$
            $$\kappa_\theta(x):=
            \begin{cases}
                0 & \quad\text{if }x\leq \alpha-\delta -\theta\\
                \frac{x-\alpha+\delta}{\theta} & \quad\text{if } \alpha-\delta-\theta\leq x\leq
                \alpha-\delta\\
                1 & \quad\text{if } \alpha-\delta \leq x\leq \alpha+\delta\\
                \frac{\alpha+\delta+\theta-x}{\theta} & \quad\text{if } \alpha+\delta\leq x\leq
                \alpha+\delta +\mu\\
                0 & \quad\text{otherwise,}
            \end{cases}.
            $$
            Taking $\theta\to 0$ in \eqref{eq:egalite} we obtain 
            $$\int_{\alpha-\delta}^{\alpha+\delta}{u(T,x)dx}-\int_{\alpha-\delta}^{\alpha+\delta}{u(T+h,x)dx}+\int_T^{T+h}{f(u(t,\alpha-\delta))-f(u(t,\alpha+\delta))dt}=0.$$
            Using the definition of $\Oc_{\alpha,\delta}$ \eqref{eq:defOc}, the $L^\infty$ bound and the convexity of $f$ we now get
            $$|\Fc(z)(T+h)-\Fc(z)(T)|\leq h \frac{\max(f(C),f(-C))}{2\delta}.$$
        \item Consider $y$ and $z$ two Lipschitz bounded functions. Let us call $u$ and $v$ the
            entropy solution involved in the definitions of $\Fc(y)$ and $\Fc(z)$. Using Proposition~\ref{prop:Comparaison}
            we get
            \begin{align*}
                |\Fc(y)(T)-\Fc(z)(T)|&\leq \frac{1}{2\delta} \int_0^L{|u(T,x)-v(T,x)|dx}\\
                                     &\leq \frac{1}{2\delta} \int_0^T{|\Ac_{\epsilon,\nu}(y(t))-\Ac_{\epsilon,\nu}(z(t))|dt}\\
                                     &\leq \frac{1}{2\delta}\int_0^T{\frac{\epsilon}{\nu}|y(t)-z(t)|dt}\\
                                     &\leq \frac{\epsilon}{2\delta \nu}T ||y-z||_{L^\infty(0,T)}.
            \end{align*}
            This is enough to show that $\Fc$ is continuous with respect to the uniform convergence on
            any compact. But $\Fc$ takes value on a set which is uniformly bounded with equilipschitz
            functions, and is therefore a compact set for this precise topology. We can apply Schauder fixed point
            Theorem. (see \cite{Rudin1973}) 
        \item Let us now consider two such fixed points $y$ and $z$. The previous calculation but  gives 
            $$\forall T\geq 0,\qquad |y(T)-z(T)|\leq \frac{\epsilon T}{2\delta
            \nu}||y-z||_{L^\infty(0,T)}.$$
            For continuous functions $t\mapsto ||.||_{L^\infty(0,t)}$ is continuous and nondecreasing so if we define
            $$T^*:=\sup\{T\geq 0\ :\ ||y-z||_{L^\infty(0,T)}=0\},$$
            we see that if $\frac{\epsilon T}{2\delta \nu}<1$ we have $T\leq T^*$, therefore
            $$T^*\geq \frac{2\delta \nu}{\epsilon}.$$
            If we suppose that $T^*<+\infty$, since $y$ is equal to $z$ on $[0,T^*]$ so are $u$ and $v$ but then applying Proposition~\ref{prop:Comparaison}
            with $u(T,.)$ as initial data we have with the same calculation as before
            $$\forall T\geq T^*,\qquad |y(T)-z(T)|\leq \frac{\epsilon (T-T^*)}{2\delta
            \nu}||y-z||_{L^\infty(T^*,T)}.$$
            but if $\frac{(T-T^*)\epsilon}{2\delta \nu}<1$ we see that $||y-z||_{L^\infty(0,T)}=0$ so
            $T\leq T^*$ which is absurd therefore $T^*=+\infty$ and the fixed point of $\Fc$ is unique.
    \end{itemize}
\end{proof}

\section{Generalized Characteristics and the boundary}
\label{sec:GeneralizedCharacteristicsAndTheBoundary}
We describe in this section a technical tool that will be used extensively in the following to
study the local properties of the solution of the closed loop system.  We begin by recalling a
few definitions and results from \cite{Dafermos1977}. We will refer in this
section to the system 
\begin{equation}
    \begin{cases}
        \partial_t u + \partial_x (f(u))=0& \qquad \quad\text{on }  (0,+\infty)\times (0,L),\\
        u(0,.)=u_0 &\qquad  \quad\text{on } (0,L),\\
        \sgn(u(t,L^-)-u_r(t))(f(u(t,L^-))-f(k))\geq 0& \qquad \forall k \in I(u_r(t),u(t,L^-)),\ dt\ a.e., \\
        \sgn(u(t,0^+)-u_l(t))(f(u(t,0^+))-f(k))\leq 0& \qquad  \forall k \in I(u_l(t),u(t,0^+)),\ dt\ a.e.,
    \end{cases} \label{eq:syscar}
\end{equation}
where \textbf{only for this section} $u_l$ and $u_r$ are two regulated functions of time thus defined on $\R^+$, $u_0\in \BV(0,L)$ and $u$ is the unique entropy solution. 
\begin{rema}\label{rem:conditionsBord}
    If the boundary condition at $x=0$ in \eqref{eq:syscar} is satisfied at time $t$ it means
    that
    \begin{itemize} 
        \item either $u(t,0^+)=u_l(t)$
        \item or for any state $k\in I(u_l(t),u(t,0^+))$ we have
            $$\frac{f(u(t,0^+)-f(k)}{u(t,0^+)-k}\leq 0,$$
            which means that any wave generated by the Riemann problem between $u_l(t)$ and
            $u(t,0^+)$ leaves the domain.
    \end{itemize}
    the same kind of interpretation holds for the boundary condition at $x=L$.
\end{rema}
Following \cite{Dafermos1977} we introduce the notion of generalized characteristic.
\begin{defi}
    \begin{itemize}
        \item If $\gamma$ is an absolutely continuous function defined on an interval $(a,b) \subset \R^+$ and with values in $(0,L)$, we say that $\gamma$ is a generalized characteristic of \eqref{eq:syscar} if:
            \begin{equation*}
                \dot{\gamma}(t)\in I(f'(u(t,\gamma(t)^-)),f'(u(t,\gamma(t)^+))) \qquad dt\ a.e..
            \end{equation*}
            This is the classical characteristic ODE taken in the weak sense of Filippov
            \cite{Filippov1960}.
        \item A generalized characteristic $\gamma$ is said to be genuine on $(a,b)$ if:
            \begin{equation*}
                u(t,\gamma(t)^+)=u(t,\gamma(t)^-)\quad dt\ a.e..
            \end{equation*}
    \end{itemize}
\end{defi}
We recall the following results from \cite{Dafermos1977}.
\begin{theo}
    \label{thm:Daf}
    \begin{itemize}
        \item  For any $(t,x)$ in $(0,+\infty)\times(0,L)$ there exists at least one generalized characteristic $\gamma$ defined on $(a,b)$
            such that $a<t<b$ and  $\gamma(t)=x$.
        \item If $\gamma$ is a generalized characteristics defined on $(a,b)$ then for almost all $t$ in $(a,b)$:
            \begin{equation*}
                \dot{\gamma}(t)=
                \begin{cases}
                    f'(u(t,\gamma(t)) & \quad\text{if }u(t,\gamma(t)^+)=u(t,\gamma(t)^-),\\
                    \frac{f(u(t,\gamma(t)^+))-f(u(t,\gamma(t)^-))}{u(t,\gamma(t)^+)-u(t,\gamma(t)^-)} & \quad\text{if } u(t,\gamma(t)^+)\neq u(t,\gamma(t)^-).
                \end{cases}
            \end{equation*}
        \item If $\gamma$ is a genuine generalized characteristics on $(a,b)$ (with
            $\gamma(a),\gamma(b)\in (0,L)$), then there exists a $C^1$ function $v$ defined on $(a,b)$ such that:
            \begin{gather}
                u(b,\gamma(b)^+) \leq v(b) \leq u(b,\gamma(b)^-), \notag\\
                u(t,\gamma(t)^+)=v(t)=u(t,\gamma(t)^-)\quad \forall t \in (a,b),\label{eq:boundentre} \\
                u(a,\gamma(a)^-)\leq v(a) \leq u(a,\gamma(a)^+). \notag
            \end{gather}
            Furthermore $(\gamma,v)$ satisfy the classical ODE equation:
            \begin{equation}
                \begin{cases}
                    \dot{\gamma}(t)=f'(v(t)), \\
                    \dot{v}(t)=0,
                \end{cases}\quad  \forall t\in(a,b). \label{eq:carode}
            \end{equation}
        \item Two genuine characteristics may intersect only at their endpoints.
        \item If $\gamma_1$ and $\gamma_2$ are two generalized characteristics defined on $(a,b)$, then we have:
            \begin{equation*}
                \forall t \in (a,b),\quad \left(\gamma_1(t)=\gamma_2(t)\Rightarrow \forall s\geq t,\ \gamma_1(s)=\gamma_2(s)\right).
            \end{equation*}
        \item For any $(t,x)$ in $\R^+ \times (0,L)$ there exist two generalized characteristics $\chi^+$ and $\chi^-$ called maximal and minimal and associated to $v^+$ and $v^-$ by \eqref{eq:carode}, such that if $\gamma$ 
            is a generalized characteristic going through $(t,x)$  then 
            \begin{gather*} 
                \forall s\leq t,\qquad \chi^-(s) \leq \gamma(s) \leq \chi^+(s),\\
                \quad\text{$\chi^+$ and $\chi^-$ are genuine on $\{s<t\}$ },\\
                v^+(t)=u(t,x^+)\qquad \quad\text{and } \qquad v^-(t)=u(t,x^-).
            \end{gather*}
    \end{itemize}
\end{theo}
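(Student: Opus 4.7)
The statement collects six standard facts from \cite{Dafermos1977} on generalized characteristics, transposed to the strip $(0,+\infty)\times(0,L)$. Since all six items concern interior points, the boundary data $u_l,u_r$ play no role in the analysis itself; they only ensure that $u$ is a bona fide entropy solution in the strip, which is the only input needed. The plan is therefore to recast each item as a local statement about the Cauchy problem and reproduce the Dafermos argument.

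Existence of a generalized characteristic through $(t_0,x_0)$ is Filippov's theorem \cite{Filippov1960} applied to the set-valued field $\Phi(t,x):=I(f'(u(t,x^-)), f'(u(t,x^+)))$: this field is uniformly bounded and upper-semicontinuous with non-empty compact convex values, since $u(t,\cdot)\in\BV$ admits left and right traces everywhere and $t\mapsto u(t,\cdot)$ is Lipschitz into $L^1$. The a.e.\ formula for $\dot\gamma$ then follows from a dichotomy: at a time where $u(t,\gamma(t)^+)=u(t,\gamma(t)^-)$, the interval $\Phi$ degenerates to the single point $f'(u)$; at a jump time, the selection must be the Rankine--Hugoniot speed $\sigma=(f(u^+)-f(u^-))/(u^+-u^-)$, because any excursion of $\gamma$ to one side of the shock is immediately pushed back by $\Phi$.

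For a genuine characteristic, the continuity of $u$ along $\gamma$ combined with the weak form of the PDE gives $\dot v=0$, whence the classical ODE system \eqref{eq:carode}. Non-crossing of two genuine characteristics is uniqueness of that ODE with data $(\gamma(t_0),v(t_0))$ at any interior intersection point; the forward merging property encodes that once two characteristics meet they produce a shock and the unique continuation is along that shock, again by the previous dichotomy.

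The maximal and minimal characteristics $\chi^\pm$ are built as pointwise sup and inf over the family of all generalized characteristics through $(t,x)$; these are equi-Lipschitz with constant $\max|f'|$ on the range of $u$, so by Arzela--Ascoli the envelopes are themselves selections of $\Phi$, hence generalized characteristics. Genuineness of $\chi^\pm$ on $\{s<t\}$ is then forced by extremality: a shock crossing would allow one to push $\chi^+$ strictly to the larger side, contradicting maximality, and similarly for $\chi^-$. The identities $v^\pm(t)=u(t,x^\pm)$ are read off by passing to the limit of the constant $v$-values along approximating genuine characteristics. The main obstacle throughout is the shock-tracking step for $\dot\gamma$, which in \cite{Dafermos1977} relies on the full BV structure of $u$ (Volpert chain rule, structure of the singular set) to justify rigorously that a generalized characteristic follows shocks with Rankine--Hugoniot speed; once that is in hand, everything else reduces to classical ODE uniqueness and Arzela--Ascoli.
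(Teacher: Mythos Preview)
The paper does not prove this theorem at all: it is stated with the sentence ``We recall the following results from \cite{Dafermos1977}'' and no proof is given. So there is nothing to compare your argument against in the paper itself; the paper treats Theorem~\ref{thm:Daf} as a black box imported from Dafermos.

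Your sketch is a reasonable high-level outline of how the results are established in \cite{Dafermos1977}, and you correctly identify that the only nontrivial input needed is that $u$ is an entropy solution in the interior, so the boundary data are irrelevant here. That said, as you yourself note, the sketch is not self-contained: the step showing that a generalized characteristic must track shocks at Rankine--Hugoniot speed is genuinely delicate and requires the BV structure theorem and the Volpert chain rule, which you defer to the reference. Similarly, the claim that the pointwise sup/inf of the family of characteristics is again a Filippov solution, and that extremality forces genuineness backward in time, are correct in spirit but each hides a real argument. If the intent is simply to indicate why the theorem is plausible and point to \cite{Dafermos1977} for the details, what you wrote is fine; if the intent is to give an independent proof, substantial work remains.
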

Note that in the previous theorem, every property dealt only with  the interior of $\R^+ \times
[0,L]$. The following result describe the influence of the boundary conditions on the generalized characteristics.
\begin{prop}\label{prop:carBord}
    Let $u$ be the unique entropy solution of \eqref{eq:syscar} and consider $\chi$  a genuine characteristic on an interval $[a,b]$ such that
    $$\forall t\in (a,b],\qquad \chi(t)\in (0,L),$$
    then we know from the Theorem above  that there is a constant $v\in \R$ such that
    $$\forall t\in [a,b],\qquad \dot{\chi}(t)=f'(v)$$
    and 
    $$\forall t\in (a,b),\qquad u(t,\gamma(t))=v.$$
    But then we have
    \begin{equation}\label{eq:gauche}
        \chi(a)=0 \Rightarrow u_l(a^+)\leq v\leq u_l(a^-),
    \end{equation}
    \begin{equation}\label{eq:droite}
        \chi(a)=L \Rightarrow u_r(a^-)\leq v\leq u_r(a^+).
    \end{equation}
    (Where the existence of the limits is an hypothesis)
\end{prop}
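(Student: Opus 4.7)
The first step is purely kinematic. Theorem \ref{thm:Daf} gives on $(a,b)$ that $\dot\chi(t)=f'(v)$ and $u(t,\chi(t))=v$. Combined with $\chi(a)=0$ and $\chi(t)\in(0,L)$ for $t\in(a,b]$, this forces $f'(v)>0$, hence $v>0$ since $f$ is strictly convex with $f'(0)=0$. The case $\chi(a)=L$ is symmetric and gives $v<0$.

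For the first inequality $u_l(a^+)\leq v$ I argue by contradiction. Suppose $u_l(a^+)>v$; then there are $\eta,\delta>0$ with $u_l(t)>v+\delta$ on $(a,a+\eta)$. The admissibility condition \eqref{eq:ConditionBordGauche}, read via Definition \ref{def:defConditionsBords}, leaves for a.e.\ such $t$ only two possibilities: either $u(t,0^+)=u_l(t)>v$, or $u(t,0^+)<0$ with $f(u(t,0^+))\geq f(u_l(t))$. The second is incompatible with the presence of $\chi$: going from the trace $u(t,0^+)<0$ at $x=0$ to $v>0$ at $x=\chi(t)$ would require a rarefaction fan at the corner $(a,0)$ whose speed-zero ray (value $0$) sits on $x=0$, forcing $u(t,0^+)\geq 0$. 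Thus $u(t,0^+)=u_l(t)$, and for a.e.\ small $\epsilon>0$ the forward generalized characteristic $\gamma$ issued from $(a+\epsilon,0)$ is, by Theorem \ref{thm:Daf}, genuine with constant value $u_l(a+\epsilon)>v+\delta$ and slope $f'(u_l(a+\epsilon))>f'(v)$. Since $\gamma(a+\epsilon)=0<\chi(a+\epsilon)$ but $\gamma$ is strictly faster than $\chi$, the two genuine characteristics meet at some time $\tau\in(a+\epsilon,b)$ for $\epsilon$ small, contradicting the non-intersection part of Theorem \ref{thm:Daf}.

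For the second inequality $v\leq u_l(a^-)$, I run the same argument just \emph{before} $a$. Assuming $u_l(a^-)<v$, one has $u_l(t)<v-\delta$ on $(a-\eta,a)$. The same admissibility dichotomy gives $u(t,0^+)<v$ for a.e.\ such $t$; one may further reduce to the case $u(t,0^+)>0$ on a subset of positive measure accumulating at $a^-$ (otherwise $u_l(a^-)\leq 0<v$, and the boundary structure near the corner already forbids $\chi$, contradiction). For $\epsilon$ small and $t_0=a-\epsilon$ a suitable Lebesgue point, the forward genuine characteristic $\gamma$ from $(t_0,0)$ has slope $f'(u(t_0,0^+))<f'(v)$ and reaches $x=f'(u(t_0,0^+))\epsilon>0$ at time $a$, strictly ahead of $\chi(a)=0$. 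Extended past $a$, $\gamma$ is overtaken by the faster $\chi$ at some $\tau^*\in(a,b)$, again contradicting the non-intersection of two genuine characteristics. The right-boundary case $\chi(a)=L$ follows by the symmetric argument using \eqref{eq:ConditionBordDroit}.

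The main obstacle is the local analysis at the corner $(a,0)$, needed in two places: to rule out the bad branch of admissibility ($u(t,0^+)<0$) in the presence of $\chi$, and to ensure that the auxiliary forward characteristics from the left boundary remain genuine on a time interval long enough to reach the predicted collision with $\chi$. Both are consequences of Theorem \ref{thm:Daf}: any intervening shock would itself lie between two genuine characteristics, one of which would have to cross $\chi$ in its interior of definition, producing the same contradiction.
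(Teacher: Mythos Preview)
Your collision-of-characteristics strategy is the natural first idea, but it has a real gap that is not closed by the last paragraph. Theorem~\ref{thm:Daf} gives you genuine \emph{backward} extremal characteristics from \emph{interior} points; it does not give you a genuine forward characteristic issued from a boundary point $(a+\epsilon,0)$ carrying the value $u_l(a+\epsilon)$. Your claimed fix --- ``any intervening shock would itself lie between two genuine characteristics, one of which would have to cross $\chi$'' --- is not a proof: the values along those flanking genuine characteristics are unknown (they could well be $\leq v$, in which case no crossing with $\chi$ is forced), and tracing them back to the boundary to control their values is exactly the statement you are trying to prove. The same objection applies to the ``rarefaction fan at the corner'' picture you use to exclude the admissibility branch $u(t,0^+)<0$: this is a heuristic about the Riemann problem at the corner, not something you can read off Theorem~\ref{thm:Daf}.

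The paper deliberately avoids forward characteristics from the boundary. It first proves an integral identity over trapezoids/triangles (Lemma~\ref{lemm:trapeze}), and then for $u_l(a^+)\leq v$ uses only the admissibility consequence $f(u(t,0^+))\geq f(u_l(t))$ (valid a.e.\ once $f'(u_l(a^+))>0$) together with convexity of $f$, integrating over a thin triangle at the corner and a thin strip along $\chi$. For $v\leq u_l(a^-)$ it proceeds in three steps: a trapezoid estimate gives $u(a,0^+)\geq v$; a backward-characteristic argument from interior points near $(a,0)$ shows that for $t$ just below $a$ one actually has $u(t,0^+)=u_l(t)$ and $f'(u_l(a^-))\geq 0$; and a final triangle estimate closes the inequality. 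The point is that integral balances are robust to the a.e.\ boundary condition and never require launching anything forward from $x=0$. If you want to rescue your approach, you would need an independent argument that, for $t$ in a right neighbourhood of $a$, the region $\{0<x<\chi(t)\}$ is filled by values $\geq u_l(a^+)-o(1)$; that is essentially what the trapeze lemma delivers.
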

The main difficulty in the proof comes from the fact that the boundary conditions in
\eqref{eq:syscar} are satisfied only for almost all times.
Before the proof let us begin with a lemma.
\begin{lemm}\label{lemm:trapeze}
    Consider $0\leq t_0<t_1$, $0\leq x_A<x_B\leq L$ and $0\leq x_C<x_D\leq L$. We introduce
    $$s_l:=\frac{x_C-x_A}{t_1-t_0},\qquad s_r=\frac{x_D-x_B}{t_1-t_0}.$$
    We then have
    \begin{multline*}
        \int_{x_A}^{x_B}{u(t_0,x)dx}-\int_{x_C}^{x_D}{u(t_1,x)dx}\\
        +\int_{t_0}^{t_1}{|f(u(t,(x_A+s_l(t-t_0))^+))-s_lu(t,(x_A+s_l(t-t_0))^+)]}\\
        -[f(u(t,(x_B+s_r(t-t_0))^-))-s_ru(t,(x_B+s_r(t-t_0))^-)]dt=0.
    \end{multline*}
    Note that by letting $x_A$ tend to $x_B$ we have the following equality wih $x_A=x_B$.
    \begin{multline*}
        -\int_{x_C}^{x_D}{u(t_1,x)dx}\\
        +\int_{t_0}^{t_1}{[f(u(t,(x_A+s_l(t-t_0))^+))-s_lu(t,(x_A+s_l(t-t_0))^+)]}\\
        -[f(u(t,(x_A+s_r(t-t_0))^+))-s_ru(t,(x_A+s_r(t-t_0))^+)]dt=0.
    \end{multline*}
\end{lemm}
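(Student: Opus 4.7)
The identity is the integral balance law $\partial_t u+\partial_x f(u)=0$ integrated over the trapezoidal region
$$\Omega:=\{(t,x)\,:\,t_0\le t\le t_1,\ x_A+s_l(t-t_0)\le x\le x_B+s_r(t-t_0)\},$$
whose boundary consists of the two horizontal segments at $t=t_0$ and $t=t_1$ and the two slanted segments of slopes $s_l$, $s_r$. The four terms in the claimed formula are exactly the line integrals of the flux vector $(u,f(u))$ along $\partial\Omega$, with the outward unit normals $(-1,0)$, $(+1,0)$ at the horizontal sides and directions proportional to $(s_l,-1)$, $(-s_r,+1)$ at the two slanted sides — which explains both the signs and the appearance of the combinations $f(u)-s_l u$ and $f(u)-s_r u$. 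So the proof amounts to justifying a Green–Gauss identity for the $BV$ vector field $(u,f(u))$, which has zero distributional divergence.

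The plan is to test the weak form of the equation against a Lipschitz approximation of $\mathbf 1_\Omega$. First, taking $k=\pm C$ (where $C$ is the $L^\infty$ bound on $u$) in the definition of entropy solution with test functions supported in $(0,+\infty)\times(0,L)$ yields, by the same density argument already used for \eqref{eq:egalite}, the identity $\iint u\,\partial_t\phi+f(u)\,\partial_x\phi\,dt\,dx=0$ for every Lipschitz, compactly supported $\phi$. I then build $\phi_\eta(t,x)=\rho_\eta(t)\,\kappa_\eta(t,x)$, where $\rho_\eta$ is a trapezoidal Lipschitz bump with ramps of width $\eta$ near $t_0$ and $t_1$ (as in the construction already carried out for $\psi_\theta$), and $\kappa_\eta(t,\cdot)$ is a piecewise affine cutoff of width $\eta$ around the $t$-dependent interval $[x_A+s_l(t-t_0),\,x_B+s_r(t-t_0)]$. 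Plugging $\phi_\eta$ into the weak identity and letting $\eta\to0$ gives the four boundary contributions: the ramps of $\rho_\eta$ produce the two $L^1$-averages $\int u(t_0,x)\,dx$ and $-\int u(t_1,x)\,dx$ (using $t\mapsto u(t,\cdot)\in L^1$ is Lipschitz, hence strongly continuous, so no issue of traces in time), while the ramps of $\kappa_\eta$ produce, side by side, $\partial_x\kappa_\eta$ times $f(u)$ and $\partial_t\kappa_\eta$ times $u$; since $\partial_t\kappa_\eta=-s_{l,r}\,\partial_x\kappa_\eta$ along the respective slant, the two terms combine into $f(u)-s_l u$ and $-\bigl(f(u)-s_r u\bigr)$ respectively.

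The main obstacle is identifying the correct one-sided traces on the two oblique lines since $u$ is merely $BV$ in space. I would handle this by changing variables on each slanted side: setting $y=x-s_l(t-t_0)$ on the left edge turns the conservation law into $\partial_t u+\partial_y(f(u)-s_l u)=0$ in $(t,y)$, and the standard normal-trace theorem for $BV$-divergence-free vector fields (or, equivalently, the existence of strong one-sided traces for $L^\infty$ conservation-law solutions along vertical lines in the shifted frame) produces the integrable trace of $f(u)-s_l u$ at $y=x_A$, to be evaluated on the right side of the discontinuity set — that is, from inside $\Omega$, which is precisely the $+$ trace written in the lemma. The analogous argument with $y=x-s_r(t-t_0)$ handles the right slanted side and yields the $-$ trace of $f(u)-s_r u$ at $y=x_B$. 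The degenerate case $x_A=x_B$ announced in the remark is then obtained by letting $x_A\uparrow x_B$ in the non-degenerate formula: the left-hand horizontal integral vanishes in the limit, while the two slanted integrals survive with traces taken from the same side $(x_A+s_\bullet(t-t_0))^+$, as stated.
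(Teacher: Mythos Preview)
Your proof is correct and follows essentially the same route as the paper: build a Lipschitz test function $\phi_\eta=\rho_\eta\cdot\kappa_\eta$ approximating the indicator of the trapezoid, insert it in the weak formulation, and let $\eta\to0$. The only cosmetic difference is that the paper places the $\eta$-ramps of the spatial cutoff strictly \emph{inside} the trapezoid, so the correct one-sided traces (the $+$ on the left slant, the $-$ on the right) fall out directly from the $\BV_x$ regularity of $u(t,\cdot)$, without the change-of-variables/normal-trace argument you sketch.
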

\begin{proof}
    We define
    $$\chi_l(t):=x_A+(t-t_0)s_l,\qquad \chi_r(t):=x_B+s_r(t-t_0).$$
    We can see that 
    $$\forall t\in [t_0,t_1],\qquad 0\leq \chi_l(t)< \chi_r(t)\leq L.$$
    Of course we also have
    $$\chi_l(t_1)=x_C,\qquad \chi_r(t)=x_D.$$
    We will now define for $\epsilon>0$ small enough
    $$\rho_\epsilon(t):=
    \begin{cases}
        0 & \quad\text{if } t\leq t_0\\
        \frac{t-t_0}{\epsilon} &  \quad\text{if } t_0\leq t\leq t_0+\epsilon\\
        1&  \quad\text{if } t_0+\epsilon \leq t\leq t_1-\epsilon\\
        \frac{t_1-t}{\epsilon} &  \quad\text{if } t_1-\epsilon \leq t\leq t_1\\
        0 &  \quad\text{if } t_1\leq t
    \end{cases}
    $$
    it is clear that $\rho_\epsilon$ is Lipshitz continuous and has support 
    $[t_0,t_1]$. We also need
    $$\phi_\epsilon(t,x):=
    \begin{cases}
        0 & \quad\text{if } x\leq \chi_l(t)+\epsilon \\
        \frac{x-(\chi_l(t)+\epsilon)}{\epsilon}& \quad\text{if } \chi_l(t)+\epsilon \leq x\leq
        \chi_l(t)+2\epsilon\\
        1 & \quad\text{if } \chi_l(t)+2\epsilon \leq x\leq \chi_r(t)-2\epsilon\\
        \frac{\chi_r(t)-\epsilon -x}{\epsilon} & \quad\text{if } \chi_r(t)-2\epsilon \leq x\leq
        \chi_r(t)-\epsilon\\
        0 & \quad\text{if }\chi_r(t)-\epsilon \leq x
    \end{cases}
    $$
    Now one can see that the function defined by 
    $$\forall (t,x)\in \R^2,\qquad \phi_\epsilon(t,x):=\rho_\epsilon(t)\phi_\epsilon(t,x)$$
    is Lipschitz and has compact support in $(0,+\infty)\times (0,L)$ so one can use it has a test function in the weak
    formulation of the equation that is
    $$\int_{\R^2}{u(t,x)\partial_t\psi(t,x)+f(u(t,x))\partial_x\psi(t,x)dtdx}=0.$$
    But then letting $\epsilon \to 0^+$ and remembering that $u$ is Lipshitz in time with value in
    $L^1$ we get the result.
\end{proof}
\begin{proof}[Proof of Proposition~\ref{prop:carBord}]
    We will prove the two inequalities of \eqref{eq:gauche} independently, \eqref{eq:droite} is
    a simple adaptation and so left to the reader.
    \begin{itemize}
        \item Since $f'(v)>0$ the estimate is obvious if $f'(u_l(a^+))\leq 0$ so we can suppose
            $f'(u_l(a^+))>0$ but then it implies that for $\delta$ small enough
            $$\forall t\in [a,a+\delta],\qquad f'(u_l(t))>0$$
            and using the definition of $\Adm_l$ in this case we see that for almost all $t\in
            [a,a+\delta]$ we have $f(u(t,0^+))\geq f(u_l(t))$.

            For $\epsilon>0$ small enough we apply Lemma~\ref{lemm:trapeze} to $t_0=a$,
            $t_1=a+\frac{\epsilon}{f'(v)}$, $x_A=x_B=0$, $x_C=0$ and
            $x_D=\epsilon$ to obtain
            $$-\int_0^\epsilon{u(a+\frac{\epsilon}{f'(v)},x)dx}+\int_a^{a+\frac{\epsilon}{f'(v)}}{f(u(t,0^+))-[f(v)-vf'(v)]dt}=0.$$
            Using the previous inequality we get that for $\epsilon< f'(v)\delta$ 
            $$-\int_0^\epsilon{u(a+\frac{\epsilon}{f'(v)},x)dx}+\int_a^{a+\frac{\epsilon}{f'(v)}}{f(u_l(t))-[f(v)-vf'(v)]dt}\leq 0,$$
            but now $f$ is convex for 
            $$f(u_l(t))-f(v)+vf'(v)\geq f'(v) u_l(t),$$
            so we actually have
            \begin{equation}\label{eq:triangle}
                -\int_0^\epsilon{u(a+\frac{\epsilon}{f'(v)},x)dx}+\int_a^{a+\frac{\epsilon}{f'(v)}}{f'(v)u_l(t)dt}\leq 0.
            \end{equation}
            If we apply Lemma~\ref{lemm:trapeze} to $t_0=a+\frac{\epsilon}{f'(v)}$, $t_1=b$,
            $x_A=0$, $x_B=\epsilon$, $x_C=f'(v)(b-a)-\epsilon$ and $x_D=f'(v)(b-a)$ we get
            \begin{multline*}
                \int_0^\epsilon{u(a+\frac{\epsilon}{f'(v)},x)dx}-\int_{f'(v)(b-a)-\epsilon}^{f'(v)(b-a)}{u(b,x)dx}
                +\int_{a+\frac{\epsilon}{f'(v)}}^b{[f(u(t,(f'(v)(t-a)-\epsilon)^+)}\\
                -f'(v) u(t,(f'(v)(t-a)-\epsilon)^+)]-[f(v)-f'(v)v]dt=0.
            \end{multline*}
            And the convexity of $f$ implies
            $$[f(u(t,(f'(v)(t-a)-\epsilon)^+)-f'(v)
            u(t,(f'(v)(t-a)-\epsilon)^+)]-[f(v)-f'(v)v]\geq 0,$$
            so we actually have
            \begin{equation}\label{eq:trapeze}
                \int_0^\epsilon{u(a+\frac{\epsilon}{f'(v)},x)dx}-\int_{f'(v)(b-a)-\epsilon}^{f'(v)(b-a)}{u(b,x)dx}\leq
                0.
            \end{equation}
            But now adding \eqref{eq:triangle} and \eqref{eq:trapeze} we end up with
            $$\int_{f'(v)(b-a)-\epsilon}^{f'(v)(b-a)}{u(b,x)dx}\geq f'(v)\int_a^{a+\frac{\epsilon}{f'(v)}}{u_l(t)dt}
            ,$$
            and finally dividing by $\epsilon$ and taking $\epsilon\to 0^+$ we obtain the left
            inequality  of \eqref{eq:gauche}.
        \item For the right inequality of \eqref{eq:gauche}. We will proceed in three steps.
            \begin{itemize}
                \item       Step 1, using Lemma~\ref{lemm:trapeze} we get for $c\in (a,b)$
                    \begin{multline*}
                        \int_a^c{f(u(t,\chi(t)))-\dot{\chi}(t)u(t,\chi(t))-f(u(t,\chi(t)+\epsilon))+\dot{\chi}(t)u(t,\chi(t)+\epsilon)
                        dt}\\
                        \int_0^\epsilon{u(a,x)dx}-\int_{\chi(c)}^{\chi(c)+\epsilon}{u(c,x)dx} =0,
                    \end{multline*}
                    using the properties of $\chi$ we get
                    \begin{multline*}
                        \int_a^c{f(v)-f'(v)v-f(u(t,\chi(t)+\epsilon))+f'(v)u(t,\chi(t)+\epsilon)
                        dt}\\
                        \int_0^\epsilon{u(a,x)dx}-\int_{\chi(c)}^{\chi(c)+\epsilon}{u(c,x)dx} =0,
                    \end{multline*}
                    using the convexity of $f$ we get
                    $$\int_0^\epsilon{u(a,x)dx}\geq \int_{\chi(c)}{\chi(c)+\epsilon}{u(c,x)dx},$$
                    dividing by $\epsilon$ and letting $\epsilon \to 0$ we get
                    $$u(a,0^+)\geq u(c,\chi(c)^+)=v.$$
                \item Step 2, since $f'(v)>0$ and $f'$ is increasing we have $f'(u(a,0^+))>0$  so for some
                    point $\bar{x}\in (0,L)$ arbitrarily close to $0$ we have $f'(u(a,x_0))>0$ and
                    considering the minimal backward characteristic $\gamma$  through $(t,\bar{x})$ we
                    have $\dot{\gamma}(t)=f'(\bar{v})>0$ for some $\bar{v}$, therefore if $\bar{x}$ is
                    close enough to $0$ we have a time $c \in (0,a)$ such that $\gamma(c)=0$. If we
                    consider now a time $t\in (c,a)$ should we have $f'(u(t,0^+))<0$ then for $x$ close
                    enough to $0$ we have both 
                    $$f'(u(t,x))<0 \qquad \quad\text{and }\qquad x<\gamma(t),$$
                    but then the maximal backward characteristic through $(t,x)$ will necessarily cross
                    $\gamma$ in $(c,t)$ which is not possible thanks to Theorem~\ref{thm:Daf}. We can thus
                    conclude that for any time $t\in (c,a)$
                    $$f'(u(t,0^+))\geq 0.$$
                    But then using since the boundary condition at $0$ in \eqref{eq:syscar} holds for
                    almost all time $t$ we see that 
                    $$u(t,0^+)=u_l(t),\qquad dt\ a.e.\ \quad\text{in } (c,a).$$
                    And also $f'(u_l(a^-))\geq 0$.
                \item Step 3, let us consider $u_i>u_l(a^-)$. Using Step 2 we can see that $f'(u_i)>0$.
                    Furthermore for a small $\delta>0$ we get
                    $$\forall t\in (a-\delta,a),\qquad f'(u_i)>f'(u_l(t)).$$
                    For $\epsilon>0$, denote by $a_\epsilon$ and $\chi_\epsilon$ the time and curve
                    defined by
                    $$a_\epsilon:=a-\frac{\epsilon}{f'(u_i)},\qquad \forall t\in(a_\epsilon,a),\quad
                    \chi_\epsilon(t):=\epsilon -f'(u_i)(a-t).$$
                    We have $\chi_\epsilon(a_\epsilon)=0$ so using Lemma~\ref{lemm:trapeze} on the
                    triangle of vertices $(a,0)$, $(a,\epsilon)$ and $(a_\epsilon, 0)$ we get
                    $$-\int_0^\epsilon{u(a,x)dx}+\int_{a_\epsilon}^a{f(u(t,0^+))-f(u(t,\chi_\epsilon(t)^-)+f'(u_i)u(t,\chi_\epsilon(t)^-)dt}=0.$$
                    Using the result of the previous step we have then
                    $$-\int_0^\epsilon{u(a,x)dx}+\int_{a_\epsilon}^a{f(u_l(t))-f(u(t,\chi_\epsilon(t)^-)+f'(u_i)u(t,\chi_\epsilon(t)^-)dt}=0.$$
                    but since for $\epsilon$ small enough $a_\epsilon\geq a-\delta$ we have $f'(u_l(t))>0$
                    and $u_l(t)<u_i$. This means that $f(u_l(t))\leq f(u_i)$ so we have
                    $$-\int_0^\epsilon{u(a,x)dx}+\int_{a_\epsilon}^a{f(u_i)-f(u(t,\chi_\epsilon(t)^-)+f'(u_i)u(t,\chi_\epsilon(t)^-)dt}\geq 0.$$
                    But $f$ is convex therefore
                    $$f(u_i)-f(u(t,\chi_\epsilon(t)^-)+f'(u_i)u(t,\chi_\epsilon(t)^-)\leq f'(u_i)u_i,$$
                    and so
                    $$-\int_0^\epsilon{u(a,x)dx}+(a-a_\epsilon)f'(u_i)u_i \geq 0,$$
                    dividing by $\epsilon$ and taking $\epsilon \to 0^+$ we end up with
                    $$-u(a,0^+)+u_i\geq 0,$$
                    so using the result of Step 1 we can conclude
                    $$u_i\geq u(a,0^+)\geq v,$$
                    But $u_i$ was arbitrarily close to $u_l(a^-)$ so as announced
                    $$v\leq u_l(a^-).$$

            \end{itemize}
    \end{itemize}
\end{proof}
\section{Asymptotic Stabilization}
\label{sec:AsymptoticStabilization}
In this section $u$ will be a given solution to the closed loop system \eqref{eq:SystemeFerme}.
We will show estimates \eqref{eq:StabiliteAs}.
\begin{lemm}\label{lem:troisZones}
    Consider $T_1$ given by the following definition 
    $$A_{m,\epsilon}:=\frac{f(u_l(m)-\epsilon)}{2},\qquad
    T_1:=\max\left(\frac{L}{f'(u_l(A_{m,\epsilon}))},\frac{L}{-f'(u_r(A_{m,\epsilon}))}\right).$$
    there exist two Lipschitz functions
    $\beta_1,\beta_2:(T_1,+\infty)\to (0,L)$ such that
    if we consider $(\bt,\bx)\in (T_1,+\infty)\times (0,L)$, then we have the alternatives
    \begin{equation}\label{eq:conditionGauche}
        0<\bx<\beta_1(\bt)\qquad \Rightarrow\qquad  u(\bt,\bx^\pm)\in
        [u_l(m)-\epsilon,u_l(m)+\epsilon],
    \end{equation}
    \begin{equation}\label{eq:conditionCentre}
        \beta_1(\bt)<\bx<\beta_2(\bt)\qquad \Rightarrow \qquad  -\frac{L}{\bt} \leq
        f'(u(\bt,\bx^{\pm}))\leq \frac{L}{\bt}
    \end{equation}
    \begin{equation}\label{eq:conditionDroite}
        \beta_2(\bt)<\bx<L\qquad \Rightarrow \qquad u(\bt,\bx^\pm)=u_r(m).
    \end{equation}
\end{lemm}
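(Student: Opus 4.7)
My plan is to classify every interior point $(\bt,\bx)$ with $\bt>T_1$ through its minimal and maximal backward generalized characteristics, combining Theorem~\ref{thm:Daf} with the boundary interaction of Proposition~\ref{prop:carBord}. Let $\chi^\pm$ denote these characteristics through $(\bt,\bx)$ and $v^\pm=u(\bt,\bx^\pm)$ their constant values; by the last item of Theorem~\ref{thm:Daf}, each $\chi^\pm$ is a genuine straight segment of slope $f'(v^\pm)$ which must terminate at exactly one of: the initial slice at some $x_0\in[0,L]$, the left boundary at $(a,0)$ with $a>0$, or the right boundary at $(a,L)$ with $a>0$. These three possibilities will correspond respectively to Zones~2, 1 and 3 of the lemma.

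In the initial-slice case, writing $x_0=\bx-f'(v^\pm)\bt\in[0,L]$ yields $|f'(v^\pm)|\le\max(\bx,L-\bx)/\bt\le L/\bt$, which is exactly \eqref{eq:conditionCentre}. If instead $\chi^\pm(a)=0$ for some $a>0$, Proposition~\ref{prop:carBord} forces $v^\pm\in[u_l(m)-\epsilon,u_l(m)+\epsilon]$, since by construction of $\Ac_{\epsilon,\nu}$ the closed-loop left boundary datum $u_l(m)-\Ac_{\epsilon,\nu}(\Oc_{\alpha,\delta}(u(t,\cdot)))$ stays in this interval for every $t$; this is \eqref{eq:conditionGauche}. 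Symmetrically, $\chi^\pm(a)=L$ forces $v^\pm=u_r(m)$ by Proposition~\ref{prop:carBord}, giving \eqref{eq:conditionDroite}. The definition of $T_1$ through $f'(u_l(A_{m,\epsilon}))$ and $f'(u_r(A_{m,\epsilon}))$ is precisely what ensures that for $\bt>T_1$ the extremal characteristics with these critical speeds have had time to cross the full interval $(0,L)$, so that the trichotomy above is exhaustive at every $(\bt,\bx)$. I would then define $\beta_2(\bt)=\inf\{\bx\in(0,L):\chi^+\text{ from }(\bt,\bx)\text{ terminates at }x=L\}$ and $\beta_1(\bt)=\sup\{\bx\in(0,L):\chi^-\text{ from }(\bt,\bx)\text{ terminates at }x=0\}$.

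The step I expect to be the main obstacle is to verify that $\beta_1$ and $\beta_2$ are single-valued, Lipschitz, and ordered correctly on all of $(T_1,+\infty)$. Typically at $\bx=\beta_i(\bt)$ the solution carries a shock whose curve coincides with $\beta_i$ and propagates at the Rankine--Hugoniot speed; showing that these shock curves are well-defined globally, remain in $(0,L)$, and cleanly separate the three zones without splitting or merging requires the convexity of $f$, the bounds \eqref{eq:boundentre} on the trace values across a shock from Theorem~\ref{thm:Daf}, and the non-crossing property of genuine characteristics. An additional subtlety is that the left boundary value is time-varying through the feedback, so when applying Proposition~\ref{prop:carBord} at successive times $a>0$ one must keep careful track of the range $[u_l(m)-\epsilon,u_l(m)+\epsilon]$ and of the Lipschitz-in-time regularity of $t\mapsto\Ac_{\epsilon,\nu}(\Oc_{\alpha,\delta}(u(t,\cdot)))$ established in Section~\ref{sec:ExistenceandUniqueness}.
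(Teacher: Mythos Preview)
Your overall strategy---classifying points via their backward genuine characteristics into three zones and identifying $\beta_1,\beta_2$ as the interface curves---is exactly the paper's approach, and your plan for the Lipschitz property (the interfaces are forward characteristics, hence shock curves with Rankine--Hugoniot speed) matches the paper as well.

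However, you misidentify the role of $T_1$. The trichotomy (a backward characteristic terminates at $t=0$, at $x=0$, or at $x=L$) is exhaustive for \emph{every} $\bt>0$; nothing about $T_1$ is needed for that. What $T_1$ must guarantee is that $\beta_1(\bt)>0$ and $\beta_2(\bt)<L$, i.e.\ that Zones~1 and~3 are genuinely nonempty so that $\beta_1,\beta_2$ take values in the open interval $(0,L)$ as the lemma asserts. Your heuristic ``characteristics with these critical speeds have had time to cross $(0,L)$'' does not establish this: there is no a~priori reason a characteristic with speed near $f'(u_l(A_{m,\epsilon}))$ or $f'(u_r(A_{m,\epsilon}))$ should actually be present in the solution.

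The paper's argument for $\beta_1(\bt)>0$ is by contradiction and uses the boundary admissibility set $\Adm_l$ in an essential way. If $\beta_1(\bt)=0$, the Zone~2 estimate at $x=0^+$ gives $-L/\bt\le f'(u(\bt,0^+))\le 0$, so $u(\bt,0^+)\le 0$. Since the left boundary datum exceeds $u_l(m)-\epsilon>0$, the admissibility condition (Remark~\ref{rem:conditionsBord}) forces $f(u(\bt,0^+))\ge f(u_l(m)-\epsilon)$. On the other hand, for $\bt\ge T_1$ the speed bound and the definition of $T_1$ give $u_r(A_{m,\epsilon})\le u(\bt,0^+)\le 0$, hence $f(u(\bt,0^+))\le A_{m,\epsilon}=\tfrac12 f(u_l(m)-\epsilon)$, a contradiction. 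The argument at $x=L$ is symmetric. This is precisely where the specific numerical value of $T_1$ enters, and it cannot be replaced by a crossing-time heuristic.
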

\begin{proof}
    We will proceed in mutliple steps.
    \begin{itemize}
        \item We consider $(\bt, \bx)\in (0,+\infty)\times (0,L)$. Using Theorem~\ref{thm:Daf} we get the
            minimal backward characteristics $\gamma$. We call $[a,b]$ its maximal domain of definition.
            Following Theorem~\ref{thm:Daf} and the maximality of $[a,b]$ we see that we have
            $$\left(\gamma(a)=0 \quad\text{and }a>0\right) \quad\text{or }\left(\gamma(a)=L \quad\text{and
            }a>0\right)\quad\text{or } a=0.$$
            \begin{itemize}
                \item In the first case, using
                    Theorem~\ref{thm:resultatBoucleOuverte},
                    we have $u\in \Lip([0,\bt];L^1(0,L))$ therefore the boundary data at $x=0$  
                    $$t\mapsto u_l(m)-\Ac_{\epsilon,\nu}(\Oc_{\alpha,\delta}(u(t,.))),$$
                    is Lipschitz.  Using Proposition~\ref{prop:carBord} we have then
                    $$ u(\bt,\bx^-)=u_l(m)-\Ac_{\epsilon,\nu}(\Oc_{\alpha,\delta}(u(a,.)))\in
                    [u_l(m)-\epsilon,u_l(m)+\epsilon],$$
                    given the definition of $\Ac_{\epsilon,\nu}$.
                \item In the second case, Proposition~\ref{prop:carBord} gives directly
                    $$u(\bt,\bx^-)=u_r(m).$$
                \item Finally in the last case we have $a=0$ and
                    $$\forall t\in [0,b],\qquad \dot{\gamma}(t)=f'(u(\bt,\bx^-)),$$
                    thus
                    $$\gamma(\bt)-\gamma(0)=f'(u(\bt,\bx^-))\bt,$$
                    which means (since $\bx=\gamma(\bt)$)
                    $$f'(u(\bt,\bx))=\frac{\bx-\gamma(0)}{\bt}.$$
                    Using $ 0\leq \gamma(0)\leq L$ we get
                    \begin{equation}\label{eq:limBord}
                        \frac{\bx-L}{\bt}\leq f'(u(\bt,\bx^-))\leq \frac{\bx}{\bt}.
                    \end{equation}
                    which implies
                    $$-\frac{L}{\bt}\leq f'(u(\bt,\bx^-))\leq \frac{L}{\bt}.$$
            \end{itemize}

            Now using Theorem~\ref{thm:Daf} we know that genuine characteristics do not cross.
            Therefore given $\bt$ the set of $\bx$ for which we are in first case, second case or third
            case are connected therefore intervals, they form a partition of $[0,L]$. And from a
            geometrical viewpoint it is obvious that from the left to the right we have points from
            the first case, points from the last case and points from the second case.

        \item At this point we have indeed constructed two functions $\beta_1$ and $\beta_2$ such that
            \eqref{eq:conditionGauche}, \eqref{eq:conditionCentre} and \eqref{eq:conditionDroite}
            hold for $\bx^-$. 

            Since if $0<c<\bx<d<1$ we have 
            $$ u(\bt,\bx^+)=\lim_{\epsilon\to 0^+}u(\bt,(\bx+\epsilon)^-),$$
            \eqref{eq:conditionGauche}, \eqref{eq:conditionCentre}, \eqref{eq:conditionDroite} and
            \eqref{eq:limBord} also hold for $\bx^+$.

            Note that using \eqref{eq:limBord} we get for any $t>0$
            $$\beta_1(t)=0 \qquad \Rightarrow\qquad -\frac{L}{t}\leq f'(u(t,0^+))\leq 0.$$
            We have on one hand
            $$u(t,0^+)\leq 0< u_l(m)-\Ac_{\epsilon,\nu}(\Oc_{\alpha,\delta}(u(t,.))),$$
            and using Remark~\ref{rem:conditionsBord} we can deduce
            $$f(u(t,0^+))\geq f(u_l(m)-\Ac_{\epsilon,\nu}(\Oc_{\alpha,\delta}(u(t,.)))),$$
            which implies
            $$f(u(t,0^+))\geq f(u_l(m)-\epsilon).$$
            On the other hand, if $t\geq T_1$, we have using the definition of $T_1$
            $$f'(u_r(A_{m,\epsilon}))\leq -\frac{L}{T_1}\leq f'(u(t,0^+))\leq 0,$$
            which implies that
            $$u_r(A_{m,\epsilon})\leq u(t,0^+)\leq 0,$$
            and therefore
            $$f(u(t,0^+))\leq f(u_r(A_{m,\epsilon}))=A_{m,\epsilon}=\frac{f(u_l(m)-\epsilon)}{2}<f(u_l(m)-\epsilon).$$
            which is contradictory.  And we can deduce that 
            $$\forall t\geq T_1,\qquad \beta_1(t)>0.$$

            In the same way, using \eqref{eq:limBord} we get for any $t>0$
            $$\beta_2(t)=L\qquad \Rightarrow \qquad 0\leq f'(u(t,L^-))\leq \frac{L}{t}.$$
            On one hand we get
            $$u(t,L^-)\geq 0> u_r(m),$$
            and using Remark~\ref{rem:conditionsBord} we have in particular
            $$f(u(t,L^-))\geq f(u_r(m))=m,$$
            On the other hand, if $t\geq T_1$ we have using the definition of $T_1$
            $$f'(u_l(A_{m,\epsilon}))\geq \frac{L}{T_1}\geq \frac{L}{t}\geq f'(u(t,L^-))\geq 0,$$
            and therefore
            $$u_l(A_{m,\epsilon})\geq u(t,L^-)\geq 0.$$
            We can then obtain
            $$f(u(t,L^-))\leq
            f(u_l(A_{m,\epsilon}))=A_{m,\epsilon}=\frac{f(u_l(m)-\epsilon)}{2}<f(u_l(m)-\epsilon)<f(u_l(m))=m,$$
            which is contradictory. So we can conclude that 
            $$\forall t\geq T_1,\qquad \beta_2(t)<0.$$

        \item It remains to prove that $\beta_1$ and $\beta_2$ are Lipschitz functions. To this end let
            us first remark that those functions are uniquely defined through our previous requirements.

            Now consider $\bt\in (T_1,+\infty)$. Then $\bx:=\beta_1(\bt)\in (0,L)$, so we have a
            unique forward characteristic through $(\bt,\bx)$ let us call it $\gamma_1$,
            defined on a certain interval $[\bt,c]$ with $c>\bt$.  Let us fix $t\in(\bt,c)$.

            If we choose $x\in (0,\gamma_1(t))$ if we consider $\gamma_2$ the minimal backward characteristic through
            $(t,x)$, it is  defined maximally on an interval $[b,t]$. By uniqueness of forward characteristic
            we have
            $$\forall s\in [\max(\bt,b),t],\qquad \gamma_2(s)<\gamma_1(s).$$
            We have two alternatives.
            \begin{itemize}
                \item But then if $b>\bt$ we have $\gamma_2(b)=0$ and $b>0$ therefore $x<\beta_1(t)$.
                \item If on the other hand we have $b\leq \bt$ then $\gamma_2(\bt)<\gamma_1(\bt)=\bx=\beta_1(t)$.
                    But then $\gamma_2$ is also the minimal backward characteristic through
                    $(\bt,\gamma_2(\bt))$ and thus $b>0$  and $\gamma_2(b)=0$ therefore $x<\beta_1(t)$.
            \end{itemize}
            In the end we have proved
            $$\forall x\in (0,\gamma_1(t)),\qquad x<\beta_1(t),$$
            therefore we have $\gamma_1(t)\leq \beta_1(t)$.

            If we choose $x\in (\gamma_1(t),L)$ and consider $\gamma_3$ the minimal backward
            characteristic through $(t,x)$ defined maximally on $[b,t]$. Using the uniqueness of
            forward characteristic we have
            $$\forall x\in [\max(b,\bt),t],\qquad \gamma_3(s)>\gamma_1(s).$$    
            We have two alternatives.
            \begin{itemize}
                \item If $b>\bt$ we have $\gamma_3(b)=L$ and $b>0$ therefore $x>\beta_2(t)\geq
                    \beta_1(t)$.
                \item If on the other hand $b\leq \bt$ we have
                    $\gamma_3(\bt)>\gamma_1(\bt)=\beta_1(\bt)$, but then $\gamma_3$ is also the minimal
                    backward characteristic through $(\bt,\gamma_3(\bt))$ and by construction of
                    $\beta_1$ and $\beta_2$ we have can conclude $x>\beta_1(t)$.
            \end{itemize}
            So we have proved 
            $$\forall x\in (\gamma_1(t),L),\qquad x>\beta_1(t),$$
            which implies $\gamma_1(t)\geq \beta_1(t)$. Since we already had the other inequality
            $\beta_1=\gamma_1$. But this means that $\beta_1$ being a generalized characteristic is
            Lipschitz.

            The same argument works for $\beta_2$.
    \end{itemize}
\end{proof}
\begin{lemm}\label{lem:deuxZones}
    There exists a time $T_2$ independent of the initial data $u_0$ (see \eqref{eq:defT2}
    for an exact formula) and a Lipschitz
    function $\beta:[T_2,+\infty[ \to (0,L)$ satisfying
    $$\forall t\geq T_2,\quad \forall x\in (0,L),\qquad 
    \begin{cases}
        x<\beta(t) \Rightarrow u_l(m)-\epsilon\leq u(t,x^+)\leq u_l(m)+\epsilon\\
        x>\beta(t) \Rightarrow u(t,x^-)=u_r(m)
    \end{cases}
    $$
\end{lemm}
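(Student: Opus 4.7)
The plan is to show that the middle zone of Lemma~\ref{lem:troisZones} collapses in uniformly bounded time: the free boundaries $\beta_1$ and $\beta_2$ are shock curves whose speeds are, respectively, uniformly positive and uniformly negative, so they must collide at some $T_2$ depending only on $f$, $m$, $\epsilon$ and $L$.

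For the first step I would establish uniform shock speed bounds. For $t\ge T_1$, the proof of Lemma~\ref{lem:troisZones} shows that $\beta_1$ is a forward generalized characteristic of $u$. Along $\beta_1$, the left trace $u(t,\beta_1(t)^-)$ lies in the fixed interval $[u_l(m)-\epsilon,u_l(m)+\epsilon]$, while the right trace $u(t,\beta_1(t)^+)$ belongs to the middle zone and therefore satisfies $|f'(u(t,\beta_1(t)^+))|\le L/t\le L/T_1$, so it lies in a fixed compact interval $I$ clustered around $0$ whose size is controlled by $A_{m,\epsilon}$. For $\epsilon$ small enough $I$ is disjoint from $[u_l(m)-\epsilon,u_l(m)+\epsilon]$, and the Rankine--Hugoniot formula from Theorem~\ref{thm:Daf} expresses $\dot\beta_1(t)$ as a secant slope between two points drawn from these two compact disjoint intervals. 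Strict convexity of $f$ then gives a uniform lower bound $\dot\beta_1(t)\ge c_1>0$, with $c_1$ depending only on $f,m,\epsilon,L$. The symmetric argument gives $\dot\beta_2(t)\le -c_2<0$.

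Consequently $(\beta_2-\beta_1)(t)\le L-(c_1+c_2)(t-T_1)$, so the two curves must coincide at some time in $[T_1,T_2]$ with $T_2:=T_1+L/(c_1+c_2)$. From the common position I would extend the single resulting curve by the unique forward generalized characteristic provided by Theorem~\ref{thm:Daf}, calling it $\beta$; since $u$ is uniformly bounded its Rankine--Hugoniot speed is bounded, so $\beta$ is Lipschitz. For the two-zone description I would rerun the argument of Lemma~\ref{lem:troisZones} starting from the instant $T_2$ rather than from $0$: at that instant the middle zone has zero width, so for any $\bt\ge T_2$ and $\bx\in(0,L)\setminus\{\beta(\bt)\}$, the minimal backward characteristic from $(\bt,\bx)$ has no room to remain in the interior back to $t=T_2$ and must exit through one of the lateral boundaries. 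Proposition~\ref{prop:carBord} then provides the stated trace values on either side of $\beta$.

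The main obstacle is the uniform lower bound $\dot\beta_1\ge c_1>0$ in the first step. It requires that $\epsilon$ be small enough that $[u_l(m)-\epsilon,u_l(m)+\epsilon]$ be separated from $I$ by a positive distance --- which is where the choice of $\epsilon$ (and hence of $A_{m,\epsilon}$, and ultimately of $T_1$) enters quantitatively. Given that separation, convexity translates Rankine--Hugoniot into the desired bound, but care is needed so that the bound holds uniformly for all $t\ge T_1$ and not merely asymptotically. A secondary subtlety is in the last step, where one must rule out the emergence of a new middle zone after the collision; this follows from the fact that at $t=T_2$ the ``old'' left and right zones jointly fill $[0,L]$, so no backward characteristic from a time $\bt>T_2$ can thread through an interior middle region.
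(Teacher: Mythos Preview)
Your approach matches the paper's: bound $\dot\beta_1\ge c_1>0$ and $\dot\beta_2\le -c_2<0$ via Rankine--Hugoniot along the two interfaces, then set $T_2:=T_1+L/(c_1+c_2)$. The two ``obstacles'' you flag dissolve on closer inspection: the separation of the middle-zone values from $[u_l(m)-\epsilon,u_l(m)+\epsilon]$ requires no extra smallness of $\epsilon$ (it is built into the definitions of $A_{m,\epsilon}$ and $T_1$, since $f(u_l(A_{m,\epsilon}))=f(u_l(m)-\epsilon)/2<f(u_l(m)-\epsilon)$ forces $u_l(A_{m,\epsilon})<u_l(m)-\epsilon$), and no new middle zone can appear after the collision because Lemma~\ref{lem:troisZones} already identifies $\beta_1,\beta_2$ as forward generalized characteristics, so by Theorem~\ref{thm:Daf} they coincide for all later times once they meet --- you can simply set $\beta:=\beta_1=\beta_2$ on $[T_2,\infty)$ without rerunning the three-zone argument.
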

\begin{proof}
    We just need to show the existence of $T_2>0$ independant of $u_0$  such that 
    $$\forall t\geq T_2,\qquad \beta_1(t)=\beta_2(t).$$
    Let us suppose $\beta_1(t)<\beta_2(t)$ for $t\in [T_1,T]$. 
    \begin{itemize}
        \item Using the definition of $\beta_1$ and looking at the minimal backward characteristics
            through $(t,\beta_1(t))$ we get
            $$u_l(m)-\epsilon\leq u(t,\beta_1(t)^-)\leq u_l(m)+\epsilon,$$
            and considering the maximal one
            $$f'(u_r(A_{m,\epsilon}))\leq -\frac{L}{T_1}\leq -\frac{L}{t}\leq f'(u(t,\beta_1(t)^+))\leq
            \frac{L}{t}\leq \frac{L}{T_1}\leq f'(u_l(A_{m,\epsilon})).$$
            But then
            $$u_r(A_{m,\epsilon})\leq u(t,\beta_1(t)^+)\leq u_l(A_{m,\epsilon})<u_l(m)-\epsilon.$$
            Furthermore Theorem~\ref{thm:Daf} grants for almost all $t\in (T_1,T)$
            $$\dot{\beta_1}(t)=\frac{f(u(t,\beta_1(t)^-))-f(u(t,\beta_1(t)^+))}{u(t,\beta_1(t)^-)-u(t,\beta_1(t)^+)}.$$
            Now remark that for $w,z$ the formula
            $$\frac{f(z)-f(w)}{z-w}=\int_0^1{f'(\theta w+(1-\theta)z)d\theta},$$
            show that this function is increasing in both variables therefore
            $$\dot{\beta_1}(t)\geq
            \frac{f(u_l(m)-\epsilon)-f(u_r(A_{m,\epsilon}))}{u_l(m)-\epsilon-u_r(A_{m,\epsilon})}=:c_1>0.$$
        \item Using the definition of $\beta_2$ and looking at the maximal backward characteristics
            through $(t,\beta_2(t))$ we get
            $$u_r(m)=u(t,\beta_2(t)^+),$$
            and considering the minimal one
            $$f'(u_r(A_{m,\epsilon}))\leq -\frac{L}{T_1}\leq -\frac{L}{t}\leq f'(u(t,\beta_2(t)^-))\leq
            \frac{L}{t}\leq \frac{L}{T_1}\leq f'(u_l(A_{m,\epsilon})).$$
            But then
            $$u_r(m)<u_r(A_{m,\epsilon})\leq u(t,\beta_2(t)^-)\leq u_l(A_{m,\epsilon}).$$
            Furthermore Theorem~\ref{thm:Daf} grants for almost all $t\in (T_1,T)$
            $$\dot{\beta_2}(t)=\frac{f(u(t,\beta_2(t)^-))-f(u(t,\beta_2(t)^+))}{u(t,\beta_2(t)^-)-u(t,\beta_2(t)^+)}.$$
            and as before
            $$\dot{\beta_2}(t)\leq
            \frac{f(u_l(A_{m,\epsilon}))-f(u_r(m))}{u_l(A_{m,\epsilon})-u_r(m)}=:-c_2<0.$$
        \item We have then $\beta_1(T_1)\geq 0$ for almost all $t\in (T_1,T)$,
            $\dot{\beta_1}(t)\leq c_1$, so
            $$\beta_1(T)\geq c_1(T-T_1).$$
            In the same way we obtain
            $$\beta_2(t)\leq L-c_2(T-T_1).$$
            But we had supposed $\beta_1(T)\leq \beta_2(T)$ so
            \begin{equation}\label{eq:defT2}
                T\leq T_1+\frac{L}{c_1+c_2}=:T_2.
            \end{equation}
    \end{itemize}
    We have thus shown that
    $$\forall t\geq T_2,\qquad \beta_1(t)=\beta_2(t).$$
\end{proof}
From this result, we get multiple properties.
\begin{rema}\label{rem:propDeuxZones}
    We have the following 
    \begin{gather*}
        \forall t\geq T_2,\quad \forall x\in (0,\beta(t)),\qquad u_l(m)-\epsilon\leq u(t,x)\leq u_l(m)+\epsilon \\
        \intertext{and combined with Definition~\ref{def:defConditionsBords} this implies } 
        u(t,0^+)=u_l(m)-\Ac_{\epsilon,\nu}(\Oc_{\alpha,\delta}(u(t,.))\quad dt\ a.e.\\
        \intertext{We also have }
        \forall t\geq T_2,\quad \forall x\in (\beta(t),L),\qquad u(t,x)=u_r(m),\\
        \intertext{We can then deduce using \eqref{eq:defOc}}
        \forall t\geq T_2,\quad -\frac{u_l(m)-u_r(m)}{2}\leq \Oc_{\alpha,\delta}(u(t,.))\leq
        \frac{u_l(m)-u_r(m)}{2}+\epsilon,\\
        \forall t\geq T_2,\qquad \big(\beta(t)>\alpha+\delta \qquad \Rightarrow \qquad \Oc_{\alpha,\delta}(u(t,.))\geq
        \frac{u_l(m)-u_r(m)}{2}-\epsilon\big),\\
        \forall t\geq T_2,\qquad\big( \beta(t)<\alpha-\delta,\qquad \Rightarrow \qquad
        \Oc_{\alpha,\delta}(u(t,.))=\frac{u_r(m)-u_l(m)}{2}\big),\\
        \intertext{And finally using Theorem~\ref{thm:Daf}}
        \forall t\geq T_2,\qquad \tc \leq   \dot{\beta}(t)\leq \bc. 
    \end{gather*}
    where we have defined
    \begin{equation}\label{eq:vitesseSinguNeg}
        \tc:=\frac{f(u_l(m)-\epsilon)-f(u_r(m))}{u_l(m)-\epsilon-u_r(m)}<0
    \end{equation}
    and
    \begin{equation}\label{eq:vitesseSinguPos}
        \bc:=\frac{f(u_l(m)+\epsilon)-f(u_r(m))}{u_l(m)+\epsilon-u_r(m)}>0.
    \end{equation}
    And note that $\bc$ and $\tc$ tend to $0$ when $\epsilon \to 0$,
    independantly of $\nu,\alpha,\delta$. 
\end{rema}
\begin{lemm}\label{lem:observation}
    Consider $\theta$ given by
    \begin{equation}\label{eq:deftheta}
        \theta:=\max\left(\frac{u_l(m)}{u_l(m)-u_r(m)-\epsilon},\frac{\epsilon-u_r(m)}{u_l(m)+\epsilon-u_r(m)}\right)\in(0,1)
    \end{equation}
    (Note that as $\epsilon$ tends to $0$, $\theta$ tends to a limit strictly less than $1$.)

    Then for $t\geq T_2$,
    $$\beta(t)\geq \alpha+\theta \delta\qquad \Rightarrow \qquad \Oc_{\alpha,\delta}(u(t,.))\geq \frac{u_l(m)-\epsilon}{2},$$
    $$\beta\leq \alpha-\theta \delta \qquad \Rightarrow \qquad \Oc_{\alpha,\delta}(u(t,.))<\frac{u_r(m)}{2}.$$
\end{lemm}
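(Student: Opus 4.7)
The plan is to compute $\Oc_{\alpha,\delta}(u(t,\cdot))$ by direct piecewise integration, using the very precise description of $u(t,\cdot)$ provided by Lemma~\ref{lem:deuxZones}. For $t\geq T_2$ the window $[\alpha-\delta,\alpha+\delta]$ is partitioned by two landmarks: the point $\alpha$, where $\bu_{\alpha,m}$ jumps between $u_l(m)$ and $u_r(m)$, and the point $\beta(t)$, where $u(t,\cdot)$ switches between being within $\epsilon$ of $u_l(m)$ and being exactly $u_r(m)$. I would split the integral $\int_{\alpha-\delta}^{\alpha+\delta}(u(t,x)-\bu_{\alpha,m}(x))\,dx$ at these two landmarks, bound the integrand pointwise on each sub-interval, and show that the two thresholds that appear are exactly the two arguments of the maximum defining $\theta$ in \eqref{eq:deftheta}.

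For the first implication, assume $\beta(t)\geq\alpha+\theta\delta$. Then $[\alpha-\delta,\alpha+\delta]$ decomposes as the concatenation of $[\alpha-\delta,\alpha]$, $[\alpha,\min(\beta(t),\alpha+\delta)]$, and $[\min(\beta(t),\alpha+\delta),\alpha+\delta]$. On the first piece $\bu_{\alpha,m}=u_l(m)$ while $u(t,\cdot)\in[u_l(m)-\epsilon,u_l(m)+\epsilon]$, so the integrand $u-\bu_{\alpha,m}$ lies in $[-\epsilon,\epsilon]$. On the middle piece, of length $L_1:=\min(\beta(t),\alpha+\delta)-\alpha\geq\theta\delta$, one has $\bu_{\alpha,m}=u_r(m)$ while $u(t,\cdot)$ is still within $\epsilon$ of $u_l(m)$, so the integrand is bounded below by $u_l(m)-u_r(m)-\epsilon$. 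On the last piece (possibly empty) both functions equal $u_r(m)$ and the integrand vanishes. Summing produces
$$2\delta\,\Oc_{\alpha,\delta}(u(t,\cdot))\;\geq\;-\epsilon\delta+(u_l(m)-u_r(m)-\epsilon)\,L_1.$$
Using the lower bound $L_1\geq\theta\delta$ together with the first argument of the maximum defining $\theta$, namely $\theta\geq u_l(m)/(u_l(m)-u_r(m)-\epsilon)$, the right-hand side rearranges to exactly $(u_l(m)-\epsilon)/2$, which proves the first implication.

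The second implication is treated by a symmetric decomposition under the hypothesis $\beta(t)\leq\alpha-\theta\delta$. This time $[\alpha-\delta,\alpha+\delta]$ splits at $\max(\beta(t),\alpha-\delta)$ and at $\alpha$. On the left piece, of length $\delta-L_2$ with $L_2:=\alpha-\max(\beta(t),\alpha-\delta)\geq\theta\delta$, the integrand lies in $[-\epsilon,\epsilon]$; on the middle piece (of length $L_2$) it equals the constant $u_r(m)-u_l(m)$; on the right piece it vanishes. Upper-bounding these three contributions and invoking the second argument of the maximum defining $\theta$, namely $\theta\geq(\epsilon-u_r(m))/(u_l(m)+\epsilon-u_r(m))$, yields the desired upper bound $\Oc_{\alpha,\delta}(u(t,\cdot))<u_r(m)/2$.

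The only real thing to watch is the sign bookkeeping for $u_l(m)-u_r(m)\pm\epsilon$: both the ``largest'' and ``smallest'' of $L_1$ and $L_2$ have to be used in the direction that turns the hypothesis $L_i\geq\theta\delta$ into a usable bound, which is exactly the reason why the two distinct arguments of the maximum in \eqref{eq:deftheta} appear. Provided $\epsilon$ is small enough that these signs are unambiguous, the argument reduces to a one-line piecewise integration for each of the two cases.
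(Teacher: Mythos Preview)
Your proposal is correct and follows essentially the same approach as the paper: both arguments bound $\Oc_{\alpha,\delta}(u(t,\cdot))$ by using the pointwise description of $u(t,\cdot)$ from Lemma~\ref{lem:deuxZones}, split the integration interval according to the position of $\beta(t)$, and identify the two thresholds in \eqref{eq:deftheta} as the critical values of $(\beta(t)-\alpha)/\delta$. The only cosmetic difference is that the paper introduces the variable $z=(\beta(t)-\alpha)/\delta$ and computes $\int\bu_{\alpha,m}$ separately, whereas you split the integrand $u-\bu_{\alpha,m}$ directly at both $\alpha$ and $\beta(t)$; the resulting inequalities and the arithmetic are identical.
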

\begin{proof}
    Let us first recall that $u_r(m)<0<u_l(m)-\epsilon$.

    Note that if $\alpha-\delta<\beta(t)<\alpha+\delta$ and we introduce
    $$z:=\frac{\beta(t)-\alpha}{\delta}\in (-1,1),$$
    \begin{itemize}
        \item         we have using the definition of $\Oc_{\alpha,\delta}$ \eqref{eq:defOc}
            \begin{align*}
                \Oc_{\alpha,\delta}(u(t,.))&\geq
                \frac{\beta(t)-(\alpha-\delta)}{2\delta}(u_l(m)-\epsilon)+\frac{\alpha+\delta-\beta(t)}{2\delta}u_r(m)-\frac{u_l(m)+u_r(m)}{2}\\
                &= \frac{\beta(t)-\alpha}{\delta}
                \frac{u_l(m)-u_r(m)-\epsilon}{2}+\frac{\delta(u_l(m)-\epsilon+u_r(m))}{2\delta}-\frac{u_l(m)+u_r(m)}{2}\\
                &= z \frac{u_l(m)-u_r(m)-\epsilon}{2}-\frac{\epsilon}{2}.
            \end{align*}
            but it is clear that this last term is increasing and equals
            $\frac{u_l(m)-\epsilon}{2}$ for  $z$ equal to
            $$\theta_1:=\frac{u_l(m)}{u_l(m)-u_r(m)-\epsilon}\in(0,1).$$
        \item we also have
            \begin{align*}
                \Oc_{\alpha,\delta}(u(t,.))&\leq
                \frac{\beta(t)-(\alpha-\delta)}{\delta}\frac{u_l(m)+\epsilon}{2}+\frac{\alpha+\delta-\beta(t)}{\delta}\frac{u_r(m)}{2}-\frac{u_l(m)+u_r(m)}{2}\\
                &=\frac{\beta(t)-\alpha}{\delta}\frac{u_l(m)-u_r(m)+\epsilon}{2}+\frac{\delta(u_l(m)+\epsilon+u_r(m)}{2\delta}-\frac{u_l(m)+u_r(m)}{2}\\
                &=z\frac{u_l(m)-u_r(m)+\epsilon}{2}+\frac{\epsilon}{2}.
            \end{align*}
            But it is clear that this last term is increasing and equals $\frac{u_r(m)}{2}$
            for $z$ equal to
            $$\theta_2:=-\frac{\epsilon-u_r(m)}{u_l(m)+\epsilon-u_r(m)}\in(-1, 0).$$
        \item So we have
            $$ \beta(t)\geq \alpha+\theta_1 \delta \qquad \Rightarrow
            \Oc_{\alpha,\delta}(u(t,.))\geq \frac{u_l(m)-\epsilon}{2},$$
            $$ \beta(t)\leq \alpha+\theta_2 \delta \qquad \Rightarrow
            \Oc_{\alpha,\delta}(u(t,.))\leq \frac{u_r(m)}{2},$$
            And also by a simple calculation
            $$-1 <\theta_2\leq \theta_1<1.$$
            So taking $\theta:=\max(|\theta_1|,|\theta_2|),$
            we have indeed $\theta\in (0,1)$ and
            $$\alpha-\theta \delta\leq \alpha +\theta_2\delta <\alpha +\theta_1 \delta \leq
            \alpha+\theta \delta.$$
        \item Finally the cases $\beta(t)<\alpha-\delta$ and $\beta(t)>\alpha+\delta$ are obvious
            consequences of the previous calculations.
    \end{itemize}
\end{proof}
\begin{lemm}
    There exists $\epsilon_0$ such that given $\nu_0:=\frac{u_l(m)-u_r(m)}{2},$ for any
    $\nu>\nu_0$ and any $\epsilon \in (0,\min(\epsilon_0,\nu-\frac{u_l(m)-u_r(m)}{2}))$ there exists
    $T_3$ independant of $u_0$ (see \eqref{eq:tempsExplicite} for the exact formula) satisfying
    $$\forall t\geq T_3,\qquad \alpha-\delta<\beta(t)<\alpha+\delta.$$
\end{lemm}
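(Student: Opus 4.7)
For $t \geq T_2$, Lemma~\ref{lem:deuxZones} gives a pure shock configuration at $\beta(t)$, with left-region values in $[u_l(m)-\epsilon, u_l(m)+\epsilon]$ (hence $f' < 0$) and $u(t,0^+) = u_l(m)-\Ac_{\epsilon,\nu}(\Oc_{\alpha,\delta}(u(t,\cdot)))$ almost everywhere by Remark~\ref{rem:propDeuxZones}. The plan is to interpret the closed loop as a delayed equation for $\beta$, exhibit a corrective shock speed whenever $\beta$ lies outside the window $(\alpha-\delta,\alpha+\delta)$, and use smallness of $\epsilon$ to prevent the delay from letting $\beta$ re-exit once it has entered.

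\textbf{Delayed coupling and corrective speed.} Because $f' < 0$ on the range of $u$ in the left region, a genuine characteristic of Theorem~\ref{thm:Daf} emerging from $(s,\beta(s))$ with constant value $u_-(s) := u(s,\beta(s)^-)$ reaches $x=0$ at time $t(s) := s + \beta(s)/|f'(u_-(s))|$, with $t(s) - s \leq \tau^* := L/\min_{|v-u_l(m)|\leq\epsilon}|f'(v)|$ bounded uniformly as $\epsilon \to 0$. The admissibility condition of Definition~\ref{def:defConditionsBords} in this regime forces the trace at $(t(s),0^+)$ to equal the boundary injection, so
\[
u_-(s) \;=\; u_l(m) - \Ac_{\epsilon,\nu}\bigl(\Oc_{\alpha,\delta}(u(t(s),\cdot))\bigr).
\]
Assuming $\beta(t) > \alpha+\delta$ throughout $[T_2,T]$, Remark~\ref{rem:propDeuxZones} pins $|\Oc(t)| \geq \nu_0 - \epsilon$ with $\nu_0 := (u_r(m)-u_l(m))/2$, and the hypothesis $\epsilon < \nu - \nu_0$ keeps $\Oc$ in the linear region of $\Ac$, producing $|\Ac(\Oc(t))| \geq \epsilon(\nu_0-\epsilon)/\nu$ with the sign oriented to drive $\beta$ toward $\alpha$. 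The delay relation together with Rankine-Hugoniot then yields $\dot\beta(s) \leq -c_* < 0$ for an explicit $c_* > 0$ whenever $t(s) \in [T_2,T]$. A symmetric estimate handles $\beta < \alpha-\delta$; integrating shows $\beta$ must enter the window by time $T_2 + \tau^* + L/c_*$.

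\textbf{Trapping.} To show $\beta$ does not leave the window again, I use the global bound $|\dot\beta| \leq \max(\bc,|\tc|)$ from Remark~\ref{rem:propDeuxZones}, which tends to $0$ with $\epsilon$ uniformly in $\nu$. During any delay interval of length $\tau^*$ the shock moves at most $\max(\bc,|\tc|)\tau^*$; by Lemma~\ref{lem:observation} the feedback changes sign as soon as $\beta$ crosses $\alpha\pm\theta\delta$. Choosing $\epsilon_0$ small enough that $\max(\bc,|\tc|)\tau^* < (1-\theta)\delta$ ensures that the reversed injection reaches the shock before $\beta$ can cover the remaining $(1-\theta)\delta$ to $\alpha\mp\delta$, which yields the announced formula \eqref{eq:tempsExplicite} for $T_3$.

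\textbf{Main obstacle.} The technical heart is the trapping step: the propagation delay $t(s)-s$ is itself state-dependent, so the closed loop reduces to a delay differential equation with variable delay, and turning the sign-reversal heuristic above into a rigorous comparison estimate (uniform in $u_0$) is the bulk of the work. The threshold on $\epsilon_0$ precisely encodes that this delay equation is a contraction toward the window; extracting the explicit constants requires careful bookkeeping but no new ideas beyond the generalized-characteristics machinery of Proposition~\ref{prop:carBord} and Theorem~\ref{thm:Daf}.
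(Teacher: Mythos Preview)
Your overall strategy matches the paper's: use Lemma~\ref{lem:observation} to get a definite-sign feedback whenever $\beta$ lies outside $[\alpha-\theta\delta,\alpha+\theta\delta]$, show this produces a corrective shock speed after a bounded propagation delay, and then choose $\epsilon_0$ so that $\max(\bc,|\tc|)\cdot\tau^*<(1-\theta)\delta$ (this is exactly condition \eqref{eq:comparaisonVitesses}) to prevent overshoot during the delay. The paper carries this out by first forcing $\beta$ into the inner window $(\alpha-\theta\delta,\alpha+\theta\delta)$ via the time bounds \eqref{eq:restrictionTemps1}--\eqref{eq:restrictionTemps2}, then running your trapping argument by contradiction.

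There is, however, a genuine error in your delay mechanism. In the left region $u\approx u_l(m)$ with $u_r(m)<0<u_l(m)$ (see the first line of the proof of Lemma~\ref{lem:observation}), so $f'(u)>0$ there, not $f'<0$ as you assert. Genuine characteristics therefore run from the boundary $x=0$ \emph{toward} the shock, and the correct coupling, obtained from the minimal backward characteristic through $(s,\beta(s))$ together with Proposition~\ref{prop:carBord}, is
\[
u(s,\beta(s)^-)\;=\;u_l(m)-\Ac_{\epsilon,\nu}\bigl(\Oc_{\alpha,\delta}(u(s-\tau,\cdot))\bigr),
\qquad \tau=\frac{\beta(s)}{f'(u(s,\beta(s)^-))}>0.
\]
The shock at time $s$ reads the boundary injection from the \emph{past} time $s-\tau$. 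Your relation $u_-(s)=u_l(m)-\Ac(\Oc(u(t(s),\cdot)))$ with $t(s)=s+\beta(s)/|f'|>s$ is non-causal and cannot be obtained from Theorem~\ref{thm:Daf}; it would make the left trace at the shock depend on a future observation. The same sign confusion appears in your redefinition $\nu_0:=(u_r(m)-u_l(m))/2$, which is negative. Once the direction of information flow is corrected, the remainder of your outline (the bound $\tau\le L/f'(u_l(m)-\epsilon)$, the corrective-speed estimate, and the trapping inequality) goes through and coincides with the paper's argument.
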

\begin{proof}
    Let us consider $\epsilon_1:=\nu-\frac{u_l(m)-u_r(m)}{2}$. Then using
    \eqref{eq:vitesseSinguNeg}, \eqref{eq:vitesseSinguPos} and \eqref{eq:deftheta},  we know
    that
    $$\frac{\bc}{(1-\theta)f'(u_l(m)-\epsilon)}
    \underset{\epsilon\to0}{\to} 0,$$
    and 
    $$\frac{\tc}{(1-\theta)f'(u_r(m))}
    \underset{\epsilon\to0}{\to} 0.$$
    So there exists $\epsilon_0<\epsilon_1$ such that 
    \begin{equation}\label{eq:comparaisonVitesses}
        \forall \epsilon \in (0,\epsilon_0),\qquad 
        \begin{cases}
            \frac{\bc}{(1-\theta)f'(u_l(m)-\epsilon)}<
            \frac{\delta}{L}\\
            \frac{\tc}{(1-\theta)f'(u_r(m))}< \frac{\delta}{L}.
        \end{cases}
    \end{equation} 
    With such a choice of parameters let us show the result. We will proceed in multiple steps.
    \begin{itemize}
        \item Let us suppose that for an interval $[a,b]\subset [T_2,+\infty[$ we have
            $$\forall t\in [a,b],\qquad \beta(t)\geq \alpha +\theta \delta.$$
            Using Lemma~\ref{lem:observation} we have
            $$\forall t\in [a,b],\qquad \Oc_{\alpha,\delta}(u(t,.))\geq \frac{u_l(m)-\epsilon}{2}>0.$$
            But thanks to $\nu> \nu_0$ and $\epsilon<\epsilon_1$ we deduce
            $$\forall t\in [a,b],\qquad \Ac_{\epsilon,\nu}\left(\Oc_{\alpha,\delta}(u(t,.))\right)\geq
            \frac{\epsilon}{\nu}\frac{u_l(m)-\epsilon}{2}>0.$$
            But then using Remark~\ref{rem:propDeuxZones} we have for almost any $t\in [a,b]$
            \begin{equation}
                0<u_l(m)-\epsilon \leq u(t,0^+)\leq
                u_l(m)-\frac{\epsilon}{\nu}\frac{u_l(m)-\epsilon}{2}<u_l(m).
            \end{equation}
            Now let us suppose that $[a+\frac{L}{f'(u_l(m)-\epsilon)},b]$ non empty and
            consider a time $\bt$ in the interval.  Looking at the minimal backward characteristic through
            $(\bt,\beta(\bt))$ and using Lemmas~\ref{lem:deuxZones} and~\ref{lem:troisZones} we
            have 
            $$u(\bt, \beta(\bt)^-)=u(\bt-\frac{\beta(\bt)}{f'(u(\bt,\beta(\bt)^-))},0^+).$$
            But clearly  using Lemma~\ref{lem:deuxZones} and~\ref{lem:troisZones} we have
            $u(\bt,\beta(\bt)^-)\geq u_l(m)-\epsilon$ so
            $$0\leq \frac{\beta(\bt)}{f'(u(\bt,\beta(\bt)^-))}\leq
            \frac{L}{f'(u_l(m)-\epsilon)},$$
            so we have 
            $$a\leq  \bt-\frac{\beta(\bt)}{f'(u(\bt,\beta(\bt)^-))}\leq b.$$
            From this and Proposition~\ref{prop:carBord} we deduce 
            $$u(\bt,\beta(\bt)^-)\leq u_l(m)-\frac{\epsilon}{\nu}\frac{u_l(m)-\epsilon}{2}.$$
            But looking at the maximal backward characteristic trhough $(bt,\beta(\bt)$ and using Lemmas~\ref{lem:deuxZones} and~\ref{lem:troisZones} 
            we get
            $$u(\bt, \beta(\bt)^+)=u_r(m).$$
            Using Theorem~\ref{thm:Daf} we have shown that if 
            $$b-a\geq \frac{L}{f'(u_l(m)-\epsilon)},$$
            then for almost any time $t$ of the interval $[a+ \frac{L}{f'(u_l(m)-\epsilon)},b]$
            $$\dot{\beta}(t)\leq
            \frac{f(u_l(m)-\frac{\epsilon}{\nu}\frac{u_l(m)-\epsilon}{2})-f(u_r(m))}{u_l(m)-\frac{\epsilon}{\nu}\frac{u_l(m)-\epsilon}{2}-u_r(m)}:=\td<0.$$
            But since $\beta$ is confined inside $(\alpha+\theta \delta,L)$ on $[a,b]$  we require
            $$\td (b-a-\frac{L}{f'(u_l(m)-\epsilon)}+L\geq \alpha+\theta \delta,$$
            which is in fact
            \begin{equation}\label{eq:restrictionTemps1}
                b-a\leq \frac{L}{f'(u_l(m)-\epsilon)}+\frac{\alpha+\theta \delta-L}{\td}.
            \end{equation}
        \item The same method show that if 
            $$\forall t\in[a,b],\qquad \beta(t)\leq \alpha-\theta \delta,$$
            then for almost any $t\in [a,b]$, we have
            $$u(t,0^+)\geq u_l(m)-\frac{\epsilon}{\nu}\frac{u_r(m)}{2}>u_l(m).$$
            Then should we have
            $$ a+\frac{\alpha-\theta \delta}{f'(u_l(m)-\epsilon)} \leq \bt \leq b,$$
            we have
            $$u(\bt,\beta(\bt^-))\geq  u_l(m)-\frac{\epsilon}{\nu}\frac{u_r(m)}{2},$$
            and then
            $$\dot{\beta}(t)\geq
            \bd:=\frac{f(u_l(m)-\frac{\epsilon}{\nu}\frac{u_r(m)}{2})-f(u_r(m))}{u_l(m)-\frac{\epsilon}{\nu}\frac{u_r(m)}{2}-u_r(m)}>0.$$
            And in the end because $\beta$ is supposed to be confined to $[0,\alpha-\theta \delta]$
            for $t\in [a,b]$ we have the restriction
            \begin{equation}\label{eq:restrictionTemps2}
                b-a\leq \frac{\alpha-\theta\delta}{f'(u_l(m)-\epsilon)}+\frac{\alpha-\theta\delta}{\bd},
            \end{equation}
        \item To conclude this part, we have showed that if we define 
            \begin{equation}\label{eq:tempsExplicite}
                T_3=T_2+\max\left(-\frac{L-\alpha-\theta\delta}{\td}-\frac{L}{f'(u_l(m)-\epsilon)}
                ,\frac{\alpha-\theta\delta}{f'(u_l(m)-\epsilon)}+\frac{\alpha-\theta\delta}{\bd}\right).
            \end{equation}
            (see \eqref{eq:restrictionTemps1} and \eqref{eq:restrictionTemps2}) then $\beta$
            cannot be continuously in $(0,\alpha-\theta \delta)$ or $(\alpha+\theta \delta)$ on
            $[T_2,T_3]$. Since $\beta$ is  Lipschitz  we have a time $\bt\in [T_2, T_3]$ such
            that
            $$\beta(\bt)\in (\alpha-\theta \delta,\alpha+\theta \delta).$$
        \item Let us now consider an hypothetical time $b\geq T_3$ such that
            $$\beta(b)\geq \alpha+\delta.$$
            Using the previous result we can define
            $$a:=\sup\{t\in [T_2,b]\ :\ \beta(a)=\alpha+\theta \delta\}.$$
            We have then $\beta(a)=\alpha+\theta \delta,$ and 
            $$\forall t\in [a,b],\qquad \beta(t)\geq \alpha+\theta \delta.$$
            But thanks to Remark~\ref{rem:propDeuxZones} we also know
            $$\forall t\in [a,b],\qquad \dot{\beta}(t)\leq \bc,$$
            therefore 
            $$\alpha+\delta-\alpha-\theta \delta\leq \beta(b)-\beta(a)\leq \bc (b-a).$$
            Therefore
            $$b-a\geq \frac{(1-\theta)\delta}{\bc}.$$
            And thanks to \eqref{eq:comparaisonVitesses} we get
            $$b-a>\frac{L}{f'(u_l(m)-\epsilon)}.$$
            But then for a time $t$ in the (non empty) interval
            $(a+\frac{L}{f'(u_l(m)-\epsilon)},b)$ we have considering the minimal backward
            characteristic
            $$u(t,\beta(t)^-)=u_l(m)-\Ac_{\epsilon,\nu}(\Oc_{\alpha,\delta}(u(s,.))),$$
            for $s$ such that 
            $$\frac{\beta(t)}{t-s}=f'(u(t,\beta(t)^-),$$
            and thus
            $$f'(u_l(m)-\epsilon)\leq \frac{L}{t-s},$$
            but then
            $$s\geq t-\frac{L}{f'(u_l(m)-\epsilon)}\geq b-\frac{L}{f'(u_l(m)-\epsilon)}>a.$$
            Since $\beta(s)\geq \alpha+\theta \delta$ we also have thanks to Lemma~\ref{lem:observation} and $\epsilon<\epsilon_1$
            $$u(t,\beta(t)^-)\leq u_l(m)-\frac{\epsilon}{\nu}\frac{u_l(m)-\epsilon}{2}<0.$$
            Now thanks to Theorem~\ref{thm:Daf} we can conclude that for almost any $t\in
            (a+\frac{L}{f'(u_l(m)-\epsilon)},b)$ we have
            $$\dot{\beta}(t)=\frac{f(u(t,\beta(t)^-))-f(u_r(m))}{u(t,\beta(t)^-)-u_r(m)}<0.$$
            But then
            $$\beta(a+\frac{L}{f'(u_l(m)-\epsilon)}>\beta(b)\geq \alpha+\delta,$$
            and once again
            $$\beta(a+\frac{L}{f'(u_l(m)-\epsilon)}-\beta(a)\leq \bc
            \frac{L}{f'(u_l(m)-\epsilon)},$$
            but we also have
            $$\beta(a+\frac{L}{f'(u_l(m)-\epsilon)}-\beta(a)\geq \alpha+\delta -\alpha-\theta
            \delta=(1-\theta)\delta,$$
            so we end up with the inequality
            $$(1-\theta)\delta\leq \bc \frac{L}{f'(u_l(m)-\epsilon)},$$
            which rewritten
            $$ \frac{\bc}{(1-\theta)f'(u_l(m)-\epsilon)}\geq \frac{\delta}{L}$$
            is incompatible with \eqref{eq:comparaisonVitesses}. 

            In the end we have shown that
            $$\forall b\geq T_3,\qquad \beta(b)< \alpha+\delta.$$
        \item The same method grants
            $$\forall b\geq T_3,\qquad \beta(b)>\alpha-\delta.$$
    \end{itemize}
\end{proof}
\begin{lemm}
    If we call $S$ the function
    $$\forall t>0,\qquad S(t):=\frac{1}{2\delta}\int_{\alpha-\delta}^{\alpha+\delta}{(u(t,x)-\bu_{\alpha,m}(x))dx},$$
    then for $\nu$ sufficiently large (see formula \eqref{eq:choixNu}) one can find a $\Cc^0$ function $\tau:[T_3,+\infty) \to
    [\frac{\alpha-\delta}{f'(u_l(m)+\epsilon)},\frac{\alpha-\delta}{f'(u_l(m)-\epsilon)}]$ such that for
    any time $t\geq T_3$
    $$\dot{S}(t)=\frac{f(u_l(m)-\frac{\epsilon}{\nu}S(t-\tau(t)))-f(u_r(m))}{2\delta}.$$
\end{lemm}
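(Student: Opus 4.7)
\medskip

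\noindent\textbf{Proof sketch.} The plan is to compute $\dot{S}(t)$ by a conservation argument on the fixed strip $(\alpha-\delta,\alpha+\delta)$, identify the boundary traces using the previous two lemmas, and then express the left trace as a delayed feedback by following a genuine backward characteristic back to $x=0$ via Proposition~\ref{prop:carBord}.

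\medskip

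\noindent\emph{Step 1 (evolution of the mean).} For $t \geq T_3$ the previous lemma gives $\beta(t) \in (\alpha-\delta,\alpha+\delta)$, and combining this with Lemmas~\ref{lem:troisZones} and~\ref{lem:deuxZones} yields
$$u(t,(\alpha+\delta)^-)=u_r(m),\qquad u(t,(\alpha-\delta)^+)\in [u_l(m)-\epsilon,u_l(m)+\epsilon].$$
Applying Lemma~\ref{lemm:trapeze} on the rectangle $[T,T+h]\times [\alpha-\delta,\alpha+\delta]$ (with $s_l=s_r=0$), dividing by $h$ and letting $h\to 0^+$ produces the balance
$$\dot{S}(T)=\frac{1}{2\delta}\bigl[f(u(T,(\alpha-\delta)^+))-f(u_r(m))\bigr].$$

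\medskip

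\noindent\emph{Step 2 (delayed feedback form).} Set $v(T):=u(T,(\alpha-\delta)^+)$; since $v(T)\geq u_l(m)-\epsilon>0$, following the maximal backward (genuine) characteristic through $(T,\alpha-\delta)$ gives a straight line of slope $f'(v(T))>0$ reaching the left boundary at time $T-\tau(T)$ with
$$\tau(T):=\frac{\alpha-\delta}{f'(v(T))}\in \left[\frac{\alpha-\delta}{f'(u_l(m)+\epsilon)},\frac{\alpha-\delta}{f'(u_l(m)-\epsilon)}\right].$$
Applying Proposition~\ref{prop:carBord} to this characteristic identifies $v(T)$ with the left boundary datum at time $T-\tau(T)$, namely
$$v(T)=u_l(m)-\Ac_{\epsilon,\nu}\bigl(S(T-\tau(T))\bigr).$$

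\medskip

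\noindent\emph{Step 3 (choice of $\nu$).} Remark~\ref{rem:propDeuxZones} furnishes the a priori bound $|S(t)|\leq \frac{u_l(m)-u_r(m)}{2}+\epsilon$ for $t\geq T_2$. Picking
\begin{equation}\label{eq:choixNu}
\nu\geq \frac{u_l(m)-u_r(m)}{2}+\epsilon
\end{equation}
ensures that $S(T-\tau(T))$ stays in the linear regime $[-\nu,\nu]$ of $\Ac_{\epsilon,\nu}$, so $\Ac_{\epsilon,\nu}(S)=\frac{\epsilon}{\nu}S$ and the stated identity follows by combining Steps~1 and~2.

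\medskip

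\noindent\emph{Main obstacle.} The delicate point is the continuity of $\tau$, since $v$ is only a one-sided trace of a $\BV_x$ function and could a priori jump. I would argue continuity of $t\mapsto v(t)$ as follows: the boundary datum $t\mapsto u_l(m)-\Ac_{\epsilon,\nu}(S(t))$ is continuous because $S$ is (as an integral in $x$ of the function $u\in \Lip_{t}L^1_x$); the genuine characteristics reaching the segment $\{(T,\alpha-\delta)\}_{T\geq T_3}$ all emanate from the left boundary with strictly positive speeds confined to $[f'(u_l(m)-\epsilon),f'(u_l(m)+\epsilon)]$, and by Theorem~\ref{thm:Daf} they cannot cross. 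Transporting the continuous boundary datum along this ordered family yields a continuous trace $v$, hence a continuous delay $\tau=(\alpha-\delta)/f'(v)$, as required.
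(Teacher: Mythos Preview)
Steps 1 and 2 are essentially the paper's argument. The gap is in Step 3 and in the continuity argument.

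\medskip

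\textbf{The condition you call \eqref{eq:choixNu} is not the one the lemma needs.} The inequality $\nu\geq \tfrac{u_l(m)-u_r(m)}{2}+\epsilon$ is equivalent to $\epsilon\leq \nu-\tfrac{u_l(m)-u_r(m)}{2}=\epsilon_1$, which was already imposed in the \emph{previous} lemma to guarantee the linear regime of $\Ac_{\epsilon,\nu}$. The present lemma requires a genuinely stronger bound on $\nu$, and that bound is exactly what makes the continuity argument work; in the paper the actual condition \eqref{eq:choixNu} reads $\dfrac{\epsilon}{\nu}<\dfrac{f'(u_l(m)-\epsilon)}{M_{m,\epsilon,\delta}}$ for a certain explicit constant $M_{m,\epsilon,\delta}$, and it is derived \emph{from} the continuity step, not from the saturation of $\Ac_{\epsilon,\nu}$.

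\medskip

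\textbf{The continuity sketch has a real hole.} From non-crossing of genuine backward characteristics you only get that the boundary-hitting-time map $T\mapsto s(T)$ is \emph{monotone}; monotonicity does not give continuity. A jump of $s$ at some $T_0$ means a whole interval of forward characteristics launched from $x=0$ focuses and is absorbed by a shock sitting at $(T_0,\alpha-\delta)$, and this can perfectly well happen with continuous boundary data: it suffices that the feedback value decreases fast enough on some time interval so that later (faster) characteristics catch up with earlier (slower) ones before reaching $x=\alpha-\delta$. Your ``transport of a continuous datum along an ordered family'' does not exclude this.

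The paper closes this gap quantitatively. If $u(\cdot,(\alpha-\delta))$ were discontinuous at $t$, the two extremal backward characteristics would hit $x=0$ at two distinct times $t_1<t_2$ with
\[
(t-t_1)\,f'\!\Big(u_l(m)-\tfrac{\epsilon}{\nu}S(t_1)\Big)=\alpha-\delta=(t-t_2)\,f'\!\Big(u_l(m)-\tfrac{\epsilon}{\nu}S(t_2)\Big).
\]
Setting $G(r):=(t-r)\,f'\!\big(u_l(m)-\tfrac{\epsilon}{\nu}S(r)\big)$ and differentiating, one finds $G'(r)<0$ on the relevant interval provided $\epsilon/\nu$ is small enough (this is where the explicit bound \eqref{eq:choixNu} in the paper comes from, using the a priori Lipschitz bound on $S$ and the range of $t-r$). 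Strict monotonicity of $G$ then forbids $G(t_1)=G(t_2)$, ruling out the shock and giving continuity of $v$, hence of $\tau$. Your argument needs this step (or an equivalent quantitative one), and with it the correct largeness condition on $\nu$ emerges.
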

\begin{proof}
    \begin{itemize}
        \item We have seen in Remark~\ref{rem:conditionsBord} that
            \begin{equation}\label{eq:borneS}
                \forall t\geq T_3,\qquad -\frac{u_l(m)-u_r(m)}{2}\leq S(t)\leq
                \frac{u_l(m)-u_r(m)}{2}+\epsilon.
            \end{equation}
            So thanks to our choices of $\epsilon<\nu-\frac{u_l(m)-u_r(m)}{2}$ we have
            $$\Ac_{\epsilon,\nu}(S(t))=\frac{\epsilon}{\nu}S(t).$$
            It is classical that $S$ is Lipschitz and satisfies for almost all $t$
            $$\dot{S}(t)=\frac{f(u(t,\alpha-\delta))-f(u(t,\alpha+\delta))}{2\delta}.$$
            Now using the previous Lemmas we have
            $$\forall t\geq T_3,\qquad u(t,\alpha+\delta)=u_r(m),$$
            and 
            $$u(t,\alpha-\delta)=u(s,0^+),$$
            with
            $$\frac{\alpha-\delta}{t-s}=f'(u(t,\alpha-\delta)).$$
            But we also have thanks to Proposition~\ref{prop:carBord} 
            $$u(s,0^+)=u_l(m)-\Ac_{\epsilon,\nu}(\Oc_{\alpha,\delta}(u(s,.))).$$
            We end up with
            $$\dot{S}(t)=\frac{f(u_l(m)-\Ac_{\epsilon,\nu}(S(t-\tau(t)))-f(u_r(m))}{2\delta}=\frac{f(u_l(m)-\frac{\epsilon}{\nu}S(t-\tau(t))-f(u_r(m))}{2\delta},$$
            with 
            $$\tau(t)=t-s=\frac{\alpha-\delta}{f'(u(t,\alpha-\delta))}.$$
            And we already see that
            $$\frac{\alpha-\delta}{f'(u_l(m)+\epsilon}\leq \tau(t)\leq
            \frac{\alpha-\delta}{f'(u_l(m)-\epsilon)},$$
            thanks to the previous Lemmas. 
        \item All that remains is to prove the regularity of the delay $\tau$. Since at this point it is
            not even clear that $\tau$ is continuous. Thanks to the finite propagation speed, a point
            of discontinuity in time of $\tau$ (thus of $u(t,\alpha-\delta)$) is also a point of
            discontinuity in space. Let us consider $t$ such that
            $$u(t,(\alpha-\delta)^-)>u(t,(\alpha-\delta)^+).$$
            Considering the extremal backward characteristics using Theorem~\ref{thm:Daf},
            Proposition~\ref{prop:carBord} and Remark~\ref{rem:conditionsBord} we get two times
            $t-\frac{\alpha-\delta}{f'(u_l(m)-\epsilon)}\leq t_1<t_2<t$ such that
            $$
            \begin{cases}
                u(t,(\alpha-\delta)^-)=u(t_2,0^+),\quad & \frac{\alpha-\delta}{t-t_2}=f'(u(t_2,0^+))\\
                u(t,(\alpha-\delta)^+)=u(t_1,0^+),\quad & \frac{\alpha-\delta}{t-t_1}=f'(u(t_1,0^+))
            \end{cases}
            $$
            We have therefore 
            $$(t-t_2)f'(u_l(m)-\Ac_{\epsilon,\nu}(S(t_2)))=(t-t_1)f'(u_l(m)-\Ac_{\epsilon,\nu}(S(t_1))).$$
            Now thanks to \eqref{eq:borneS}, \eqref{eq:defAc} and the choices
            $\epsilon<\nu-\frac{u_l(m)-u_r(m)}{2},$ we have in fact
            $$(t-t_2)f'(u_l(m)-\frac{\epsilon}{\nu}S(t_2))=(t-t_1)f'(u_l(m)-\frac{\epsilon}{\nu}S(t_1)).$$
            Now we introduce the function $G$ defined on $[t-
            \frac{\alpha-\delta}{f'(u_l(m)-\epsilon)},t]$ by
            $$G(r):=(t-r)f'(u_l(m)-\frac{\epsilon}{\nu}S(r)).$$
            It is clearly Lipschitz and since $f'$ is $\Cc^1$ we can use the chain rule to get almost
            everywhere
            $$G'(r)=-f'(u_l(m)-\frac{\epsilon}{\nu}S(r))-(t-r)\frac{\epsilon}{\nu}\dot{S}(r)
            f''(u_l(m)-\frac{\epsilon}{\nu}S(r)).$$
            But we have
            $$f'(u_l(m)-\frac{\epsilon}{\nu}S(r))\geq f'(u_l(m)-\epsilon),$$
            and 
            \begin{multline*}
                \left|\dot{S}(r)(t-r)f''(u_l(m)-\frac{\epsilon}{\nu}S(r))\right|\leq
                \frac{\max(f(u_l(m)+\epsilon)-f(u_r(m)), f(u_r(m))-f(u_l(m)-\epsilon))}{2\delta}\\
                \times \frac{\alpha-\delta}{f'(u_l(m)-\epsilon)}\quad\underset{w\in [u_l(m)-\epsilon,u_l(m)+\epsilon]}{\max} f''(w).
            \end{multline*}
            Let us call $M_{m,\epsilon,\delta}$ the righthand side, which is independant of $\nu$ then if 
            \begin{equation}\label{eq:choixNu}
                \frac{\epsilon}{\nu}< \frac{f'(u_l(m)-\epsilon)}{M_{m,\epsilon,\delta}},
            \end{equation}
            we actually have $G'(r)<0$ and then $G(t_1)\neq G(t_2)$ which is contradictory.
            Thus $\tau$ is actually continuous.
    \end{itemize}
\end{proof}
\begin{lemm}
    For $\nu$ sufficiently large (see \eqref{eq:conditionNuFinale}) and $\epsilon$ satisfying the previous conditions, we have constants $C,M_1>0$
    independant of $u_0$ such that
    $$\forall t\geq T_3,\qquad |S(t)|\leq M_1e^{-Ct}\underset{s\in[0,T_3]}{\sup} |S(s)|.$$
\end{lemm}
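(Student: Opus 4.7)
The strategy is to view the evolution of $S$ on $[T_3,+\infty)$ as a delay differential equation in closed form and invoke the general stability statement announced for Section~\ref{sec:DelayedDifferentialEquation}. Introduce
$$G(y):=\frac{f(u_l(m)-\frac{\epsilon}{\nu}y)-f(u_r(m))}{2\delta},$$
so that by the previous lemma, for every $t\geq T_3$,
$$\dot{S}(t)=G(S(t-\tau(t))),\qquad \tau(t)\in [\tau_{min},\tau_{max}],$$
with $\tau_{min}:=\frac{\alpha-\delta}{f'(u_l(m)+\epsilon)}$ and $\tau_{max}:=\frac{\alpha-\delta}{f'(u_l(m)-\epsilon)}$, and $S$ a priori confined to the interval $I:=[-\frac{u_l(m)-u_r(m)}{2},\frac{u_l(m)-u_r(m)}{2}+\epsilon]$ by \eqref{eq:borneS}. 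Since $f(u_l(m))=f(u_r(m))=m$ one has $G(0)=0$, and a direct computation gives $G'(0)=-\frac{\epsilon f'(u_l(m))}{2\delta\nu}$.

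\textbf{Step 1.} First I would rule out spurious equilibria in $I$. Solving $f(u_l(m)-\tfrac{\epsilon}{\nu}S)=f(u_r(m))=m$ yields either $S=0$ or $S=\frac{\nu(u_l(m)-u_r(m))}{\epsilon}$; the second root has modulus proportional to $\nu$ and therefore sits far outside $I$ once $\nu$ is chosen larger than the threshold from the previous lemmas. Hence on $I$, $G$ vanishes only at $0$ and keeps the sign of $-G'(0)\cdot S$ for the correct convexity reasons.

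\textbf{Step 2.} Second, by the mean value theorem applied to $f$, I would rewrite
$$G(S(t-\tau(t)))=-a(t)\,S(t-\tau(t)),\qquad a(t):=\frac{\epsilon}{2\delta\nu}f'\!\bigl(u_l(m)-\tfrac{\epsilon}{\nu}\xi(t)\bigr)$$
for some $\xi(t)$ between $0$ and $S(t-\tau(t))$. Because $S(t-\tau(t))\in I$ and because $\nu$ is large, the argument of $f'$ stays in an arbitrarily small neighbourhood of $u_l(m)$, where by uniform convexity $f'$ is uniformly positive; hence $a(t)\in [a_{min},a_{max}]$ with $0<a_{min}\leq a_{max}$ and, crucially,
$$a_{max}\cdot \tau_{max}\ \leq\ \frac{\epsilon(\alpha-\delta)}{2\delta\nu\,f'(u_l(m)-\epsilon)}\cdot \max_{w\in I}f'\!\bigl(u_l(m)-\tfrac{\epsilon}{\nu}w\bigr)\ \underset{\nu\to+\infty}{\longrightarrow}\ 0.$$
Thus $S$ satisfies on $[T_3,+\infty)$ a delayed linear-type equation $\dot{S}(t)=-a(t)S(t-\tau(t))$ whose coefficient and delay are both bounded and whose product $a_{max}\tau_{max}$ is as small as desired.

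\textbf{Step 3 and main obstacle.} The last step consists in applying the general delay lemma from Section~\ref{sec:DelayedDifferentialEquation} to conclude
$$\forall t\geq T_3,\qquad |S(t)|\leq M_1\,e^{-C(t-T_3)}\sup_{s\in[T_3-\tau_{max},T_3]}|S(s)|,$$
and then to absorb $e^{CT_3}$ and enlarge the supremum window to $[0,T_3]$ (which contains $[T_3-\tau_{max},T_3]$ because $\tau_{max}\leq T_3$ by construction) into the constant $M_1$. The main obstacle is really the abstract delay lemma itself: since $\tau$ is time varying and state dependent, one cannot appeal to a characteristic equation; I expect the clean route to be a Halanay / Razumikhin argument, namely proving that if we posit $|S(s)|\leq Ke^{-Cs}$ for all $s\leq t$, then the identity $S(t)=S(T_3)-\int_{T_3}^t a(r)S(r-\tau(r))dr$ combined with the smallness of $a_{max}\tau_{max}$ yields the same exponential bound at time $t$, provided $C$ solves a transcendental inequality of the form $C<a_{min}$ and $a_{max}e^{C\tau_{max}}/C<1$ which is solvable precisely when $a_{max}\tau_{max}$ is small. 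The existence of such $C$ depends only on $\epsilon,\nu,\delta,m,\alpha,L$ and not on $u_0$, so the conclusion is uniform in the initial data, as required.
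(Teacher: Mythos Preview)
Your overall approach coincides with the paper's: verify that $S$ satisfies a scalar delayed ODE $\dot S(t)=G(S(t-\tau(t)))$ with $G(0)=0$, two-sided negative bounds on $G'$, bounded continuous delay $\tau$, and then invoke the abstract result of Section~\ref{sec:DelayedDifferentialEquation}. The paper simply checks the hypotheses of Proposition~\ref{prop:eqRetard} and observes that its smallness condition \eqref{eq:hypo2} becomes \eqref{eq:conditionNuFinale}, which holds for $\nu$ large. Your Step~1 (ruling out a second equilibrium) is harmless but unnecessary: Proposition~\ref{prop:eqRetard} only uses $g(0)=0$ together with the slope bounds, so no separate discussion of zeros of $G$ is needed.

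The one place where your write-up slips is the tentative bookkeeping in Step~3: the pair of inequalities ``$C<a_{\min}$ and $a_{\max}e^{C\tau_{\max}}/C<1$'' is infeasible, since the second forces $C>a_{\max}\geq a_{\min}$. A naive integral comparison $S(t)=S(T_3)-\int_{T_3}^t a(r)S(r-\tau(r))\,dr$ will not close by itself. The working route (either the paper's Proposition~\ref{prop:eqRetard}, or the standard device of writing $S(t-\tau)=S(t)+\int_{t-\tau}^t a(r)S(r-\tau(r))\,dr$ and feeding this into a genuine Halanay inequality with damping $\sim a_{\min}$ and perturbation $\sim a_{\max}^2\tau_{\max}$) leads to a smallness condition of the type $a_{\max}\tau_{\max}<\mathrm{const}$, which is exactly the condition you correctly isolate and which is what \eqref{eq:conditionNuFinale} encodes. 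So the strategy is right; only the specific transcendental condition you wrote needs to be replaced.
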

\begin{proof}
    We just show that we can apply Proposition~\ref{prop:eqRetard} proved in the Appendix for
    $t\in [T_3,+\infty)$. 

    Thanks to the previous Lemmas, we have indeed $S$ is $\Cc^1$ and satisfying
    $$\dot{S}(t)=g(S(t-\tau(t)))$$
    with $\tau$ continuous and
    $$g(z)=\frac{f'(u_l(m)-\frac{\epsilon}{\nu}z)-f(u_r(m))}{2\delta}.$$
    Now thanks to Remark~\ref{rem:conditionsBord} we have
    $$-\frac{u_l(m)-u_r(m)}{2}\leq S(t)\leq \frac{u_l(m)-u_r(m)}{2}+\epsilon.$$
    The delay satisfies
    $$\frac{\alpha-\delta}{f'(u_l(m)+\epsilon)}\leq \tau(t)\leq
    \frac{\alpha-\delta}{f'(u_l(m)-\epsilon)}.$$
    Finally the function satifies $g(0)=0$ and its derivatives is given by
    $$g'(z)=\frac{\epsilon}{2\delta \nu}f''(u_l(m)-\frac{\epsilon}{\nu}z).$$
    So using the uniform convexity of $f$ we get
    $$-\frac{M\epsilon}{2\delta \nu}\leq g'(z)\leq -\frac{m\epsilon}{2\delta \nu},$$
    using
    $$m:=\underset{z\in [u_l(m)-\epsilon,u_l(m)+\epsilon]}{\min} f''(z)>0,\qquad
    M:=\underset{z\in [u_l(m)-\epsilon,u_l(m)+\epsilon]}{\max} f''(z).$$
    We conclude by observing that condition \eqref{eq:hypo2} of the Appendix becomes in our
    case
    \begin{equation}  \label{eq:conditionNuFinale}
        \frac{3(\alpha-\delta)M\epsilon}{2\delta f'(u_l(m)-\epsilon)}<\nu.
    \end{equation}
\end{proof}
\begin{lemm}
    With the previous choices of parameters, we have for $T_4$ given by
    $$T_4=T_3+\frac{L}{f'(u_l(m)-\epsilon)},$$
    two constants $M_2$ and $M_3$ such that
    \begin{equation}\label{eq:stabEtat}
        \forall t\geq T_4,\qquad |\beta(t)-\alpha|\leq M_2e^{-Ct}\underset{s\in[0,T_4]}{\sup} |S(s)|,
    \end{equation}
    and
    \begin{equation}\label{eq:stebPosition}
        \forall t\geq T_4,\quad\forall x<\beta(t),\qquad  |u(t,x)-u_l(m)|\leq M_3e^{-Ct}\underset{s\in[0,T_4]}{\sup} |S(s)|.
    \end{equation}
\end{lemm}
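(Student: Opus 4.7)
The plan is to bootstrap from the exponential decay of $S$ (the previous lemma) via the representation formula for $u$ that the characteristic analysis provides. The key observation is that once $t\geq T_4$, every point $(t,x)$ with $x<\beta(t)$ is traced back along the minimal backward characteristic to a boundary point $(s,0)$ with $s\geq T_3$; on that boundary segment the feedback has already simplified to $u(s,0^+)=u_l(m)-\frac{\epsilon}{\nu}S(s)$.

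First I would establish \eqref{eq:stebPosition}. Fix $t\geq T_4$ and $x\in(0,\beta(t))$ and let $\gamma$ be the minimal backward characteristic through $(t,x)$. By Lemma~\ref{lem:deuxZones} and Lemma~\ref{lem:troisZones}, $\gamma$ cannot terminate at the right boundary nor at the initial time (the last option would force $f'(u(t,x^-))\geq 0$, but we have $u(t,x)\in[u_l(m)-\epsilon,u_l(m)+\epsilon]$ so $f'(u(t,x^-))\leq f'(u_l(m)+\epsilon)<0$ provided $\epsilon$ is small enough, which we may assume). Hence there is a unique $s=s(t,x)$ with $\gamma(s)=0$ and, by Proposition~\ref{prop:carBord} together with Remark~\ref{rem:propDeuxZones},
$$u(t,x)=u_l(m)-\tfrac{\epsilon}{\nu}S(s).$$
The characteristic speed is $f'(u(t,x))\in[f'(u_l(m)-\epsilon),f'(u_l(m)+\epsilon)]$, so $s\geq t-\frac{L}{|f'(u_l(m)+\epsilon)|}\geq T_3$. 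Combining with the previous lemma,
$$|u(t,x)-u_l(m)|=\tfrac{\epsilon}{\nu}|S(s)|\leq \tfrac{\epsilon}{\nu}M_1 e^{-Cs}\sup_{[0,T_3]}|S|\leq M_3 e^{-Ct}\sup_{[0,T_4]}|S|,$$
where $M_3:=\frac{\epsilon}{\nu}M_1 e^{CL/|f'(u_l(m)+\epsilon)|}$. This gives \eqref{eq:stebPosition}.

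Next, I would turn \eqref{eq:stabEtat} into an algebraic consequence of the definition of $S$. Since $T_4\geq T_3$, the previous lemma guarantees $\alpha-\delta<\beta(t)<\alpha+\delta$, so a direct case split on the sign of $\beta(t)-\alpha$ in the integral defining $\Oc_{\alpha,\delta}$ (using that $\bu_{\alpha,m}$ jumps at $\alpha$ and $u(t,\cdot)$ jumps at $\beta(t)$, while being $u_r(m)$ to the right of $\beta(t)$ and $u_l(m)+\eta$ to its left, with $\eta$ controlled by \eqref{eq:stebPosition}) yields the identity, valid in both cases,
$$2\delta\,S(t)=(u_l(m)-u_r(m))(\beta(t)-\alpha)+\int_{\alpha-\delta}^{\beta(t)}\bigl(u(t,x)-u_l(m)\bigr)\,dx.$$
The integral is bounded by $2\delta\cdot M_3 e^{-Ct}\sup_{[0,T_4]}|S|$ thanks to the step just proved, and $|S(t)|\leq M_1 e^{-Ct}\sup_{[0,T_4]}|S|$ by the previous lemma. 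Isolating $\beta(t)-\alpha$ and dividing by $u_l(m)-u_r(m)>0$ gives the estimate with $M_2=\frac{2\delta M_1+2\delta M_3}{u_l(m)-u_r(m)}$.

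The only subtlety I expect is verifying that, for $t\geq T_4$ and $x<\beta(t)$, the minimal backward characteristic truly reaches the left boundary (and at a time $\geq T_3$) rather than the bottom of the rectangle or the right boundary; this requires invoking Lemmas~\ref{lem:troisZones} and~\ref{lem:deuxZones} to rule out the cases $f'(u(t,x))\geq 0$ and $\gamma$ reaching $x=L$, and using the explicit upper bound on the traversal time $L/f'(u_l(m)-\epsilon)$ built into the definition of $T_4$. Once this geometric step is in place, the rest is a direct chain of inequalities.
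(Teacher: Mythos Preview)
Your approach is essentially the paper's: first propagate the exponential decay of $S$ to $u(t,x)-u_l(m)$ for $x<\beta(t)$ via backward characteristics, then extract $|\beta(t)-\alpha|$ from the definition of $S$. Your exact identity
\[
2\delta\,S(t)=(u_l(m)-u_r(m))(\beta(t)-\alpha)+\int_{\alpha-\delta}^{\beta(t)}\bigl(u(t,x)-u_l(m)\bigr)\,dx
\]
is in fact cleaner than the one-sided inequalities the paper uses for the second step.

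There is, however, a sign slip running through your first step. In this paper the working convention is $u_r(m)<0<u_l(m)$ (see the opening line of the proof of Lemma~\ref{lem:observation}), so $f'(u_l(m)\pm\epsilon)>0$, not $<0$. The backward characteristic through $(t,x)$ with $x<\beta(t)$ therefore has \emph{positive} speed, and the correct traversal bound is
\[
t-s=\frac{x}{f'(u(t,x^-))}\leq \frac{L}{f'(u_l(m)-\epsilon)},
\]
with the slower speed $f'(u_l(m)-\epsilon)$ in the denominator, not $f'(u_l(m)+\epsilon)$; this is precisely why $T_4=T_3+L/f'(u_l(m)-\epsilon)$. With your bound $s\geq t-L/|f'(u_l(m)+\epsilon)|$ the conclusion $s\geq T_3$ would rest on a false intermediate inequality. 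Also, no sign argument is needed to rule out $\gamma$ reaching $t=0$ or $x=L$: since $x<\beta(t)=\beta_1(t)$ and $\beta_1$ is \emph{defined} in Lemma~\ref{lem:troisZones} as the right edge of the set whose minimal backward characteristics hit $x=0$, this is immediate. Once the signs are corrected your argument goes through and coincides with the paper's.
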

\begin{proof}
    We will proceed in multiple steps.
    \begin{itemize}
        \item Using the previous Lemma and the boundary conditions we have
            $$\forall t\geq T_3,\qquad |u(t,0^+)-u_l(m)|\leq \min\big(\epsilon,\frac{\epsilon
            M_1}{\nu}e^{-Ct}\underset{s\in[0,T_3]}{\sup} |S(s)|\big).$$
            For $t\geq T_3+\frac{L}{f'(u_l(m)-\epsilon)}$ and $x<\beta(t)$ looking at the
            minimal backward characteristic and using Theorem~\ref{thm:Daf} and Proposition~\ref{prop:carBord} we get
            $$|u(t,x)-u_l(m)|\leq M_2 e^{-Ct}\underset{s\in[0,T_3]}{\sup} |S(s)|$$
            with
            $$M_2:=\frac{\epsilon M_1e^{C\frac{\alpha-\delta}{f'(u_l(m)-\epsilon)}}}{\nu}.$$
            And since $T_4>T_3$, \eqref{eq:stabEtat} is now obvious.
        \item Now consider $t\geq T_4$. Let us suppose
            that $\beta(t)\geq \alpha$, we have
            \begin{align*}
                S(t)&\geq \frac{\alpha+\delta-\beta(t)}{2\delta}
                u_r(m)-\frac{u_l(m)+u_r(m)}{2}\\
                    &\qquad +\frac{\beta(t)-\alpha+\delta}{2\delta}\left(u_l(m)-\min\big(\epsilon,
                M_2e^{-Ct}\underset{s\in[0,T_4]}{\sup} |S(s)|\big)\right)\\
                & =
                (\beta(t)-\alpha)\frac{u_l(m)-u_r(m)-\epsilon}{2\delta}-\frac{M_2e^{-Ct}\underset{s\in[0,T_4]}{\sup} |S(s)|}{2}.
            \end{align*}
            And so
            $$0\leq \beta(t)-\alpha \leq M_3e^{-Ct}\underset{s\in[0,T_4]}{\sup} |S(s)|, $$
            with 
            $$K_2:=\frac{\frac{M_2}{2}+M_1}{u_l(m)-u_r(m)-\epsilon}.$$
            The case of $\alpha \geq \beta(t)$ can be treated in the same way.
    \end{itemize}
\end{proof}
\begin{proof}of Theorem~\ref{thm:principal}.

    We just need to write for $t\geq T_4$
    \begin{align*}
        \int_0^L{|u(t,x)-\bu_{\alpha,m}(x)|dx}
        &=\int_0^{\min(\alpha,\beta(t))}{|u(t,x)-\bu_{\alpha,m}(x)|dx}\\
                                               &\qquad +\int_{\min(\alpha,\beta(t))}^{\max(\alpha,\beta(t))}{|u(t,x)-\bu_{\alpha,m}(x)|dx}\\
        &\qquad +\int_{\max(\alpha,\beta(t))}^L{|u(t,x)-\bu_{\alpha,m}(x)|dx}\\
        &\leq \min(\alpha,\beta(t)) M_2e^{-Ct}\underset{s\in[0,T_3]}{\sup} |S(s)|\\
        &\qquad +|\beta(t)-\alpha| 2\max(-u_r(m),u_l(m)+\epsilon)+0\\
        &\leq (LM_2+    2\max(-u_r(m),u_l(m)+\epsilon) M_3) e^{-Ct}\underset{s\in[0,T_4]}{\sup} |S(s)|.
    \end{align*}
    The conclusion then comes from the independance of all the constants from the initial
    data. And   
    $$\underset{s\in[0,T_4]}{\sup} |S(s)|\leq C \int_0^L{|u_0(x)-\bu_{\alpha,m}(x)|dx},$$
    since the semigroup is continuous in $L^1$.
\end{proof}

\appendix
\section{A result on delayed differential equations}
\label{sec:DelayedDifferentialEquation}
\begin{prop}\label{prop:eqRetard}
    Let us consider a function $\theta\in \Cc^1(\R^+)$, a constant $T>0$  and
    a function $g$ such that
    $$\forall t\geq T>0,\qquad \dot{\theta}(t)=g(\theta(t-\tau(t))).$$
    We will suppose the following
    \begin{itemize}
        \item There exists a positive real number $M$ such that
            \begin{equation}\label{eq:borneAPriori}
                \forall t\geq 0,\qquad -M\leq \theta(t)\leq M.
            \end{equation}
        \item We have two positive real numbers $\tau_m$ and $\tau_M$ such that
            \begin{equation}\label{eq:estimDelai}
                \forall t\geq 0,\qquad \tau_m\leq \tau(t)\leq \tau_M.
            \end{equation}
        \item The function $\tau$ is continuous.
        \item We have two positive numbers $c$ and $\epsilon$ such that
            \begin{equation}\label{eq:gLip}
                \forall u\in [-M,M],\qquad -\epsilon\leq g'(u)\leq -c<0.
            \end{equation}
        \item The origin is stationnary
            \begin{equation}\label{eq:statPoint}
                g(0)=0.
            \end{equation}
        \item The following condition holds
            \begin{equation}\label{eq:compatCond1}
                \epsilon(\tau_m+\tau_M)\leq 1.
            \end{equation}
    \end{itemize}
    Then if we define
    $$\forall t\geq T,\qquad B(t):=  \underset{s\in [t-3\tau_M,t]}{\max} |\theta(t)|,$$
    we have the following conclusions.
    \begin{itemize}
        \item If the following condition holds
            \begin{equation}\label{eq:hypo1}
                \epsilon(\tau_m+\tau_M)\leq 1,
            \end{equation}
            then $M$ is non decreasing.
        \item If the following holds
            \begin{equation}\label{eq:hypo2}
                \epsilon(2\tau_M+\tau_m)<1
            \end{equation}
            then $M$ satisfies
            \begin{equation}\label{eq:decroissanceStricte}
                \forall t\geq T,\qquad B(t+3\tau_M)\leq K B(t),
            \end{equation}
            for $K$ given by
            $$K=\frac{1+\epsilon(2\tau_M+\tau_m)c\tau_M}{1+c\tau_M}<1.$$
        \item 
            And from those properties we get
            \begin{equation}\label{eq:expo}
                \forall t\geq t_0\qquad |\theta(t)|\leq e^{\frac{\ln(K)}{3\tau_M}(t-t_0)}B(t_0).
            \end{equation}
    \end{itemize}
\end{prop}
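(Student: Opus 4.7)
The plan is a Razumikhin-type envelope argument applied window by window. Fix $t_0\geq T$ and set $B_0:=B(t_0)$; the essential task is to control $|\theta(t)|$ on $[t_0,t_0+3\tau_M]$ in terms of $B_0$, after which the exponential bound \eqref{eq:expo} follows by iteration.

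First I would prove the non-increase of $B$ by contradiction. Let $t^\ast$ be the first time in $(t_0,t_0+3\tau_M]$ at which $|\theta|$ strictly exceeds $B_0$; by continuity $|\theta(t^\ast)|=B_0$, and by minimality $|\theta|\leq B_0$ throughout $[t_0-3\tau_M,t^\ast)$. Taking $\theta(t^\ast)=B_0$ without loss of generality, we have $\dot\theta(t^\ast)\geq 0$. Since $g$ is strictly decreasing with $g(0)=0$ and $\dot\theta(t^\ast)=g(\theta(t^\ast-\tau(t^\ast)))$, this forces $\theta(t^\ast-\tau(t^\ast))\leq 0$. Let $s^\ast$ be the last zero of $\theta$ in $[t^\ast-\tau_M,t^\ast]$; then $\theta>0$ on $(s^\ast,t^\ast]$ and integrating the delay equation yields
$$B_0=\theta(t^\ast)-\theta(s^\ast)=\int_{s^\ast}^{t^\ast} g(\theta(r-\tau(r)))\,dr.$$
Splitting this integral according to whether $r-\tau(r)\leq s^\ast$ (positive contribution, bounded in absolute value by $\epsilon B_0$ on a set of length at most $\tau_M$) or $r-\tau(r)>s^\ast$ (strictly negative contribution, over a set of length at least $\tau_m$, bounded below in absolute value via $|g(u)|\geq c|u|$) produces an inequality which, under \eqref{eq:hypo1}, forces $B_0=0$.

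For the strict contraction \eqref{eq:decroissanceStricte}, I would use the monotonicity just obtained to know that $|\theta|\leq B_0$ throughout $[t_0,t_0+3\tau_M]$. The claim then reduces to showing that any local extremum $t_1$ of $|\theta|$ inside this window satisfies $|\theta(t_1)|\leq K B_0$. Arguing as above, but now retaining both the upper bound $|g(u)|\leq\epsilon|u|$ on the ``build-up'' part of the integral and the lower bound $|g(u)|\geq c|u|$ on the ``damping'' part, and optimising the split of sub-intervals of total length at most $2\tau_M+\tau_m$, produces exactly the stated constant $K=(1+\epsilon(2\tau_M+\tau_m)c\tau_M)/(1+c\tau_M)$, which is strictly less than $1$ precisely under \eqref{eq:hypo2}. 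The window length $3\tau_M$ is chosen so as to accommodate a full oscillation of $\theta$ (peak, intervening zero, trough).

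Finally, the exponential estimate \eqref{eq:expo} follows by iteration: for $t\geq t_0$ write $t-t_0=3k\tau_M+r$ with $r\in[0,3\tau_M)$, apply \eqref{eq:decroissanceStricte} $k$ times to get $|\theta(t)|\leq B(t)\leq K^k B(t_0)$, and bound $K^k=e^{k\ln K}\leq e^{\frac{\ln K}{3\tau_M}(t-t_0)}$ since $\ln K<0$. The main obstacle is the bookkeeping in the contraction step: one must correctly split integration intervals by the sign of the delayed argument $\theta(r-\tau(r))$, and balance the $\epsilon$- and $c$-bounds to recover the sharp constant $K$. The specific combination $2\tau_M+\tau_m$ in \eqref{eq:hypo2} is a strong hint that the argument allocates up to two intervals of length $\tau_M$ to the ``upper bound'' part and one of length at least $\tau_m$ to the strictly damping contribution.
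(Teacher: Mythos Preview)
Your non-increase argument and the paper's are genuinely different. The paper does not argue by contradiction or integrate from a zero; instead it observes that $\theta$ is $\epsilon B(t)$-Lipschitz on $[t-2\tau_M,t+\tau_m]$ and then splits into three sign cases on $[t-\tau_M,t]$: if $\theta>0$ there then $\dot\theta<0$ on $[t,t+\tau_m]$ and either $\theta$ stays positive (hence $\leq B(t)$) or crosses zero (Lipschitz gives $|\theta|\leq\epsilon\tau_m B(t)$); the case $\theta<0$ is symmetric; and if $\theta$ vanishes somewhere in $[t-\tau_M,t]$ the Lipschitz bound directly gives $|\theta(s)|\leq\epsilon(\tau_M+\tau_m)B(t)\leq B(t)$. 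Your integration-from-the-last-zero argument also works for this part (indeed it only needs $\epsilon\tau_M<1$), though note that your parenthetical about the ``damping'' set having length at least $\tau_m$ is both unnecessary here and not justified: that set can be empty.

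The real gap is in your contraction step. Your reduction to interior local extrema is not valid, since the maximum of $|\theta|$ over $[t_0,t_0+3\tau_M]$ may well sit at an endpoint. More seriously, the sentence ``optimising the split of sub-intervals of total length at most $2\tau_M+\tau_m$ produces exactly the stated constant $K$'' does not constitute an argument, and I do not see how your integration scheme recovers that specific $K$. The paper's mechanism is quite different: it introduces an \emph{auxiliary threshold} $\alpha>0$ and runs a trichotomy on $[t-2\tau_M,t]$. If $\theta\geq\alpha$ throughout, then $\dot\theta\leq-c\alpha$ on $[t-\tau_M,t+\tau_m]$, forcing $\theta(s)\leq B(t)-c\alpha\tau_M$; the Lipschitz bound gives the lower estimate $\theta(s)\geq\alpha-\epsilon B(t)\tau_m$. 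If instead $|\theta(s_0)|\leq\alpha$ somewhere, Lipschitz yields $|\theta(s)|\leq\alpha+\epsilon B(t)(2\tau_M+\tau_m)$. One then minimises $\max\bigl(\alpha+\epsilon B(t)(2\tau_M+\tau_m),\,B(t)-c\alpha\tau_M\bigr)$ over $\alpha$, and the balance point gives precisely $K$. This threshold-then-optimise idea is the heart of the proof; your sketch does not contain it, and without it you do not have the stated contraction. The final bootstrap from $[t,t+\tau_m]$ to $[t,t+3\tau_M]$ (using that $B$ is non-increasing) and the iteration to \eqref{eq:expo} are as you describe.
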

\begin{proof}
    Let us begin by pointing out that using the definition of $B$, properties
    \eqref{eq:statPoint} and \eqref{eq:gLip} of $g$ and properties \eqref{eq:estimDelai} of
    $\tau$ we have that for any time $t$, the function $\theta$ is $\epsilon B(t)$-Lipschitz on
    $[t-2\tau_M,t+\tau_m]$.
    \begin{itemize}
        \item We will now show that $B$ is non increasing. Consider a fixed positive time $t$. 
            We have three alternatives.

            If $\forall s\in [t-\tau_M,t],\qquad \theta(s)>0$, then we have 
            $$\theta(t)>0,\quad \quad\text{and }\quad \forall s\in [t,t+\tau_m],\qquad \theta'(s)<
            0,$$
            but then $\theta$ is decreasing on $[t,t+\tau_m]$ so for $s\in [t,t+\tau_m]$ either 
            $$0\leq \theta(s)\leq \theta(t)\leq B(t),$$
            or $\theta(s)<0$ in which case we have $s_0\in [t,s]$ such that $\theta(s_0)$ but
            then 
            $$|\theta(s)|=|\theta(s)-\theta(s_0)|\leq \epsilon B(t) |s-s_0|\leq \epsilon \tau_m
            B(t)\leq B(t).$$
            In both case we got  (thanks to \eqref{eq:hypo1})
            $$\forall s\in [t,t+\tau_m],\qquad |\theta(s)|\leq B(t),$$
            and therefore 
            $$\forall s\in [t,t+\tau_m],\qquad B(s)\leq B(t).$$

            If $\forall s\in [t-\tau_M,t],\qquad \theta(s)>0,$ the symmetrical argument show
            that 
            $$\forall s\in [t,t+\tau_m],\qquad B(s)\leq B(t).$$

            Finally if we have $s_0\in[t-\tau_M,t]$ such that $\theta(s_0)=0$ then we have
            $$|\theta(s)|=|\theta(s)-\theta(s_0)|\leq \epsilon B(t) |s-s_0|\leq \epsilon
            (\tau_M+\tau_m) B(t)\leq B(t).$$
            and therefore using \eqref{eq:hypo1} 
            $$\forall s\in [t,t+\tau_m],\qquad B(s)\leq B(t).$$
        \item We will now prove \eqref{eq:decroissanceStricte}. We consider a positive time $t$
            which will be fixed. Let us consider $\alpha$ a
            positive number. We will consider once again three alternatives.
            \begin{itemize}
                \item We suppose here that
                    $$\forall s\in [t-2\tau_M,t],\qquad \theta(s)\geq \alpha.$$
                    Using \eqref{eq:gLip} we thus have
                    $$\forall s\in [t-\tau_M,t+\tau_m],\qquad \dot{\theta}(s)\leq -c\alpha,$$
                    but then we can deduce using $\theta(t-\tau_M)\leq B(t)$ that
                    $$\forall s\in [t,t+\tau_m],\qquad \theta(s)\leq B(t)-c\alpha \tau_M.$$
                    We now use the Lipschitz constant of $\theta$ to get
                    $$\forall s\in [t,t+\tau_m],\qquad \theta(s)\geq \theta(t)-\epsilon
                    B(t)(s-t)\geq \alpha -\epsilon B(t) \tau_m,$$
                    Combining the previous estimates we get
                    $$\forall s\in[t,t+\tau_m],\qquad |\theta(s)|\leq \max(\epsilon
                    B(t)\tau_m-\alpha, B(t)-c\alpha \tau_M).$$
                \item We suppose here that
                    $$\forall s\in [t-2\tau_M,t],\qquad \theta(s)\leq -\alpha.$$
                    Using \eqref{eq:gLip} we thus have
                    $$\forall s\in [t-\tau_M,t+\tau_m],\qquad \dot{\theta}(s)\geq c\alpha,$$
                    but then we can deduce using $\theta(t-\tau_M)\geq -B(t)$ that
                    $$\forall s\in [t,t+\tau_m],\qquad \theta(s)\geq -B(t)+c\alpha \tau_M.$$
                    We use the Lipschitz constant for $\theta$ to get 
                    $$\forall s\in [t,t+\tau_m],\qquad \theta(s)\leq \theta(t)+\epsilon
                    B(t)(s-t)\leq -\alpha +\epsilon B(t) \tau_m,$$
                    Combining the previous estimates we get
                    $$\forall s\in[t,t+\tau_m],\qquad |\theta(s)|\leq \max(\epsilon
                    B(t)\tau_m-\alpha, B(t)-c\alpha \tau_M).$$
                \item The last case is now obviously 
                    $$\exists s_0\in [t-2\tau_M,t],\qquad -\alpha \leq \theta(s_0)\leq
                    \alpha.$$
                    But then we have using the Lipschitz constant of $\theta$
                    $$\forall s\in[t,t+\tau_m],\qquad |\theta(s)|\leq |\theta(s_0)|+\epsilon
                    B(t) |s-s_0|\leq \alpha +\epsilon B(t)(2\tau_M+\tau_m).$$
            \end{itemize}
            We can sum up the previous estimates by
            $$\forall s\in [t,t+\tau_m],\qquad |\theta(s)|\leq \max(\alpha +\epsilon
            B(t)(2\tau_M+\tau_m), \epsilon B(t)\tau_m-\alpha, B(t)-c\alpha \tau_M).$$
            But it is clear that 
            $$\forall \alpha\geq 0,\qquad \epsilon B(t)\tau_m-\alpha\leq \alpha +\epsilon
            B(t)(2\tau_M+\tau_m),$$
            thus we have in fact 
            $$\forall s\in [t,t+\tau_m],\qquad |\theta(s)|\leq \max(\alpha +\epsilon
            B(t)(2\tau_M+\tau_m), B(t)-c\alpha \tau_M).$$
            And we can now minimize the righthandside with respect to $\alpha$. Since the
            functions are affine (one increasing the other decreasing) the corresponding $\alpha$ satisfies
            $$\alpha +\epsilon
            B(t)(2\tau_M+\tau_m)= B(t)-c\alpha \tau_M,$$
            which is 
            $$\alpha=\frac{1-\epsilon(2\tau_M+\tau_m)}{1+c\tau_m}.$$ 
            and so we end up with
            $$\forall s\in [t,t+\tau_m],\qquad
            |\theta(s)|\leq\left(\frac{1+c\tau_M\epsilon(2\tau_M+\tau_m)}{1+c\tau_M}\right)B(t).$$
            Finally by bootstrapping the result and using the fact that $B$ is non increasing
            we have
            $$\forall s\in [t,t+3\tau_M],\qquad
            |\theta(s)|\leq\left(\frac{1+c\tau_M\epsilon(2\tau_M+\tau_m)}{1+c\tau_M}\right)B(t),$$
            which is as announced
            $$\forall t\geq T,\qquad B(t+3\tau_M)\leq K B(t).$$
        \item To get \eqref{eq:expo} we consider $t>t_0$ and denote $ N$ the integer satisfying
            $$t-3(N+1)\tau_M\leq t_0\leq t-3N\tau_M \Leftrightarrow N\leq \frac{t-t_0}{3\tau_M}\leq N+1.$$
            We then have
            \begin{align*}  
                B(t)&\leq KB(t-3\tau_M)\\
                    &\leq K^2B(t-2(3\tau_M))\\
                    &\leq K^NB(t-N(3\tau_M))\\
                    &\leq e^{\ln(K) N}B(t_0)\\
                    &\leq e^{\frac{\ln(K)}{3\tau_M}(t-t_0)} B(t_0).
            \end{align*}
    \end{itemize}
\end{proof}
\bibliographystyle{plain}
\bibliography{references}

\end{document}